\def\G{\Gamma}
\def\t{\tau}
\def\d{\delta}
\def\l{\lambda}
\def\ep{\epsilon}
\def\m{\mu}
\def\o{\omega}
\def\ep{\varepsilon}
\def\f{\rightarrow}
\def\tr{\triangleright}
\def\v{\vdash}
\def\ou{\vee}
\def\et{\wedge}
\def\<{\langle}
\def\>{\rangle}
\def\F{\displaystyle\frac}
\newtheorem{theorem}{Theorem}[section]
\newtheorem{lemma}{Lemma}[section]
\newtheorem{definition}{Definition}[section]
\newtheorem{notation}{Notation}[section]
\newcommand{\st}[1]{\diamond_ {#1} }
\newcommand{\ov}[1]{\overrightarrow{#1}}
\newcommand{\bo}[1]{\circ_{#1}}
\def\tst{{\cal T}'}
\def\ct{{\cal T}}
\def\trs{\blacktriangleright}
\begin{document}
\vspace*{0cm}

\begin{center}
{\Large\bf
A short proof of the Strong Normalization of Classical  Natural Deduction
with Disjunction}\\[1cm]
\end{center}

\begin{center}
{\bf Ren\'e David and Karim NOUR}\\
LAMA - Equipe de Logique\\
 Universit\'e de Chamb\'ery\\
73376 Le Bourget du Lac\\
e-mail: \{david,nour\}@univ-savoie.fr\\[1cm]
\end{center}

\begin{abstract}
We give a direct, purely arithmetical and elementary proof of the
strong normalization of the cut-elimination procedure for  full
(i.e. in presence of all the usual connectives) classical natural
deduction.
\end{abstract}

\section{Introduction}

This paper gives a direct proof of the strong normalization of the
cut-elimination procedure for  full propositional classical logic.
By full we mean that all the connectives ($\f$, $\et$ and $\ou$)
and $\perp$ (for the absurdity) are considered as primitive and
they have their intuitionistic meaning. As usual, the negation is
defined by $\neg A = A \f \perp$.

It is well known that, when the underlying logic is the classical one
(i.e. the absurdity rule is allowed) these connectives are redundant
since, for example, $\ou$ and $\et$ can be coded by using $\f$ and
$\perp$. From a logical point of view, considering the full logic is
thus somehow useless. However, from the computer science point of
view, considering the full logic is interesting because, by the
so-called Curry-Howard isomorphism, formulas can be seen as types for
functional programming languages and correct programs can be extracted
from proofs. For that reason various systems have been studied in the
last decades (see, for example,
\cite{BaBe,CoMu,Gir,Kri,Mur1,Mur2,Par1,ReSo}) both for intuitionistic
and classical logic. The connectives $\et$ and $\ou$ have a functional
counter-part ($\et$ corresponds to a product and $\ou$ to a
co-product, i.e. a {\em case of}) and it is thus useful to have them
as primitive.

Until very recently (see the introduction of \cite{deG2} for a
brief history), no proof of the strong normalization of the
cut-elimination procedure was known for full logic. In
\cite{deG2}, de Groote gives such a proof by using a CPS-style
transformation from full classical logic to intuitionistic logic
with $\f$ as the only connective, i.e. the simply typed
$\l$-calculus. A very elegant and direct proof of the strong
normalization of the full logic is given in \cite{JoMa} but only
the intuitionistic case is given.

We give here another proof of de Groote's result. This proof is
based on a proof of the strong normalization of the simply typed
$\lambda$-calculus due to the first author (see \cite{dav1})
which, itself, is a simplification of the one given by Matthes in
\cite{JoMa}. After this paper had been written we were told by
 Curien and some others that this kind of technique was
already present in van Daalen (see \cite{vda}) and  Levy (see
\cite{jjl}). The same idea is used in \cite{davnou} to give a
short proof of the strong normalization of the simply typed
$\l\m$-calculus of \cite{Par1}. Apart the fact that this proof is
direct (i.e. uses no translation into an other system whose strong
normalization is known) and corresponds to the intuition (the main
argument of the proof is an induction on the complexity of the
cut-formula) we believe that our technique is quite general and
may be used in other circumstances.  A crucial lemma of our proof
is used in \cite{Nour} to give a semantical proof of the strong
normalization. Finally \cite{dav2}  uses the same technique to
give an elementary proof of the strong normalization of a typed
$\l$-calculus with explicit substitutions which, from the logical
point of view, correspond to explicit cuts and weakenings.

\section{The typed system}

We code proofs by using a set of terms (denoted ${\cal T}$) which
extends the $\l\m$-terms of Parigot \cite{Par1} and is given by
the following grammar where $x,y,...$ are (intuitionistic)
variables and $a,b, ...$ are (classical) variables:
$$
{\cal T} ::= x \mid \l x {\cal T} \mid ({\cal T} \; {\cal E}) \mid
\< {\cal T} , {\cal T} \> \mid \o_1 {\cal T} \mid \o_2 {\cal T}
\mid \mu a {\cal T} \mid (a \; {\cal T})
$$
$$
{\cal E} ::=  {\cal T} \mid  \pi_1 \mid \pi_2 \mid [x.{\cal T} , y.{\cal T}]
$$


 The meaning of the new constructors is given by the typing rules of
figure 1 below where $\G$ is a context, i.e.  a set of
declarations of the form $x : A$ and $a : \neg A$ where $x$ is an
intuitionistic variable, $a$ is a classical variable  and $A$ is a
formula.

Note that, since we only are concerned with the logical point of
view, we should only consider typed terms, i.e. use a
$\l$-calculus \`a la Church. However, for the simplicity of
notation, the set of terms has been given in an untyped formalism
i.e. we use a $\l$-calculus \`a la Curry.

\begin{center}
$\F{}{\G , x : A \v x : A} \, ax$ \hspace{0.5cm} $\F{\G_1 \v M : A
\quad \G_2 \v N : B} {\G_1 , \G_2 \v \< M,N \> : A \et B } \,
\et_i$ \hspace{0.5cm} $\F{\G \v M : A_1 \et A_2} {\G \v  (M \; \pi_i):
A_i } \, \et_e$
\medskip

$\F{\G, x : A \v M : B} {\G \v \l x M : A \f B} \, \f_i$
\hspace{0.5cm}  $\F{\G_1 \v M : A \f B \quad \G_2 \v N : A} {\G_1,\G_2 \v (M \; N): B }\, \f_e$
\hspace{0.5cm} $\F{\G \v M : A_i} {\G \v \o_i M : A_1 \ou A_2} \, \ou_i$
\medskip

$\F{\G \v M : A_1 \ou A_2 \quad \G_1 , x_1 : A_1 \v N_1 : C \quad
\G_2 , x_2 : A_2 \v N_2 : C} {\G,\G_1,\G_2 \v (M\;[x_1.N_1,
x_2.N_2]) : C} \, \ou_e$
\medskip

$\F{\G , a : \neg A  \v M : \bot} {\G \v \mu a M : A }  \, abs_i$
\hspace{0.5cm} $\F{\G , a : \neg A  \v M : A} {\G \v ( a \; M) :
\bot } \, abs_e$
\medskip

Figure 1.
\end{center}

This coding is essentially the same as in \cite{and} and
\cite{deG2}. We have adopted the notations of \cite{and} which are
  also used by \cite{JoMa}: what is written $\pi_i M$ in \cite{deG2} is written
  $(M \; \pi_i)$ here and  $\d(M, x_1.N_1, x_2.N_2)$ in \cite{deG2} is written
  $(M \; [x_1.N_1, x_2.N_2])$ here. These notations have the advantage of
  making the permutative and classical reduction rules more
  uniform and thus simplifies the proofs.

The cut-elimination procedure corresponds to the  reduction rules given
below. There are three kinds of cuts.

{\em Logical cuts}: they appear when the introduction of a
 connective ($\f$, $\et$ and $\ou$) is immediately followed by its
 elimination.  The corresponding rules are:
\begin{itemize}
\item $(\l x M \; N) \tr M[x:=N]$
\item $(\< M_1,M_2 \> \; \pi_i) \tr M_i$
\item $(\o_i M\; [x_1. N_1 ,x_2. N_2]) \tr N_i[x_i:=M]$
\end{itemize}

{\em Permutative cuts }: they  appear when the elimination rule of
the disjunction is followed by the elimination rule of a
connective.  They are considered as cuts because a logical cut may
be hidden by the $\ou_e$ rule. Considering these cuts is necessary
to get the sub-formula property. The corresponding rule is:
\begin{itemize}
\item $(M \; [x_1.N_1 , x_2.N_2] \; \ep)
\tr (M \; [x_1.(N_1 \; \ep) , x_2.(N_2 \; \ep)])$
\end{itemize}

{\em Classical cuts }: they  appear when the classical rule  is
followed by the elimination rule of a connective. The
corresponding rule is:
\begin{itemize}
\item $(\mu a M \; \ep) \tr \mu a M[a:=^*  \ep]$ where $M[a:=^*
\ep]$ is obtained by replacing each sub-term of $M$ of the form
$(a \; N)$ by $(a \; (N \; \ep))$.
\end{itemize}

\begin{notation} Let $M$ be in ${\cal E}$.
\begin{enumerate}
\item  $M \tr M'$   means that $M$ reduces to $M'$
 by using one step of the reduction rules given
 above. As usual,  $\tr^+$  (resp. $\tr^*$) is the transitive
  (resp. reflexive and transitive) closure of $\tr$.
 \item $M$ is strongly normalizable (this is denoted by $M \in SN$) if
there is no infinite sequence of $\tr$ reductions.

\end{enumerate}
\end{notation}

\noindent {\em Remark} \hspace{.1cm} If $M=[y_1.N_1,y_2.N_2]$, $M
\tr M'$ means that $M'$ is either $[y_1.N'_1,y_2.N_2]$ or
$[y_1.N_1,y_2.N'_2]$ where $N_1 \tr N'_1$ or $N_2 \tr N'_2$. It is
thus clear that $M \in SN$ iff $N_1, N_2 \in SN$.

\medskip

The following result is straightforward.

\begin{lemma}[Subject reduction]
If $\G \v M : A$ and $M \tr^* N$ then $\G \v N : A$.
\end{lemma}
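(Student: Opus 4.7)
By a routine induction on the length of the reduction sequence $M \tr^* N$, it suffices to treat the case of a single reduction step $M \tr N$. A further straightforward induction on the structure of $M$ (to propagate the step under term constructors) reduces the problem to showing that each of the basic reduction rules preserves the typing, when the reduct is at the head of $M$.

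Before addressing the cases, I would prove two substitution lemmas by induction on the derivation of $M$:
\begin{enumerate}
\item (Intuitionistic substitution) If $\G , x : A \v M : B$ and $\G \v N : A$, then $\G \v M[x:=N] : B$.
\item (Classical substitution) If $\G , a : \neg A \v M : C$ and $\ep \in {\cal E}$ is such that, for every $\D$ and every $P$ with $\D \v P : A$, one has $\D \v (P \; \ep) : B$ (using only the variables of $\G$ for the free variables of $\ep$), then $\G , a : \neg B \v M[a:=^* \ep] : C$.
\end{enumerate}
The first is the familiar substitution lemma for simply typed $\l$-calculus extended routinely to the new constructors. The second proceeds by induction on $M$; the only interesting case is $M=(a \; P)$, which is replaced by $(a \; (P \; \ep))$, and the hypothesis on $\ep$ is tailored precisely to type this correctly.

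With these in hand, each reduction rule is handled by inverting the typing derivation at the head: for the logical cuts $(\l x M \; N) \tr M[x:=N]$ and $(\< M_1,M_2 \> \; \pi_i) \tr M_i$ and $(\o_i M \; [x_1.N_1,x_2.N_2]) \tr N_i[x_i:=M]$ one reads off the required premises and applies the intuitionistic substitution lemma; for the permutative cut one just reassembles the derivations of $N_1$ and $N_2$ with a further $\f_e$ (or $\et_e$, $\ou_e$) step applied to each branch, which still yields a $\ou_e$ conclusion of the correct type; and for the classical cut $(\mu a M \; \ep) \tr \mu a M[a:=^* \ep]$ one uses the classical substitution lemma, the hypothesis on $\ep$ being exactly the side condition arising from the elimination rule that was being permuted inside.

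The only mildly delicate point is formulating the classical substitution lemma correctly so that it covers uniformly all four possible shapes of $\ep$ ($\ep \in {\cal T}$, $\pi_i$, or $[x_1.N_1,x_2.N_2]$); once the statement is right, the induction is mechanical, and the rest of the proof is bookkeeping.
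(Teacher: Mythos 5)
Your proof is correct and follows the standard route; the paper itself offers no proof of this lemma, simply declaring it straightforward, and your sketch supplies exactly the details (the two substitution lemmas and the case analysis on the head redex) that the authors leave implicit. The only point requiring some care in a full write-up is the one you already flag: stating the classical substitution lemma so that it uniformly covers $\ep \in {\cal T}$, $\ep = \pi_i$ and $\ep = [x_1.N_1,x_2.N_2]$, including the bookkeeping of the context carrying the free variables of $\ep$.
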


The goal of this paper is the proof of theorem \ref{SN} below.

\begin{theorem}\label{SN}
Every typed term is strongly normalizable.
\end{theorem}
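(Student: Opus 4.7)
The plan is to prove Theorem \ref{SN} by structural induction on typed terms, relying on a head-cut lemma whose proof goes by induction on the complexity of the cut-formula. For each strongly normalizable object $M$ I will use $\eta(M)$ to denote the length of the longest reduction sequence from $M$; this well-defined quantity will serve as a secondary measure inside the various nested inductions.

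The central step is a head-cut lemma of the following shape: for every type $A$ and every strongly normalizable pair $R,\varepsilon$ such that $(R\;\varepsilon)$ forms a cut with cut-formula $A$, one has $(R\;\varepsilon)\in SN$. I would prove this by induction on $A$, with an inner lexicographic induction on $(\eta(R),\eta(\varepsilon))$ together with an auxiliary structural size controlling the permutative reducts. To obtain $(R\;\varepsilon)\in SN$ it suffices to check that every one-step reduct lies in $SN$; non-head reducts yield strictly smaller inner parameters and are settled by the inner induction, while the head reduct is where the cut-formula analysis is applied. Firing a logical cut $(\l x\,M'\;N)\tr M'[x:=N]$ creates only new cuts whose cut-formula is the type of $x$, strictly below $A$, so the outer induction hypothesis applies; a permutative cut leaves the cut-formula unchanged but strictly decreases the structural size, so the auxiliary induction closes the case.

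Granting the lemma, the theorem follows by a plain structural induction on the typed term $M$. Variables, $\l$- and $\m$-abstractions, pairings, and injections are immediate from the induction hypothesis on their subterms; for the elimination shapes $(P\;\varepsilon)$ and $(P\;[x_1.N_1,x_2.N_2]\;\varepsilon)$, the hypothesis gives that the immediate subterms are in $SN$, and the head-cut lemma applies to the outermost cut whenever one is present.

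The main obstacle, as expected, is the classical cut $(\m a\,M\;\varepsilon)\tr \m a\,M[a:=^*\varepsilon]$. The substitution $a:=^*\varepsilon$ places a copy of $\varepsilon$ under every occurrence of $(a\;N)$ in $M$, potentially many of them, and each such $(N\;\varepsilon)$ may later evolve into a cut of the same formal cut-formula as the initial one, so a naive induction on the cut-formula does not go through. The rescue comes from the fact that after the reduction the classical variable $a$ has been retyped from $\neg(A\f B)$ to $\neg B$, a strictly simpler formula; a carefully stated auxiliary lemma, whose precise formulation is the genuine technical contribution, exploits this decrease to run a well-founded induction that absorbs the duplication. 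Getting the right statement for that auxiliary lemma, and ensuring that every reduct of $\m a\,M[a:=^*\varepsilon]$ makes either the outer induction on $A$ or the inner induction on $\eta$ strictly progress, is the step I expect to require the greatest care.
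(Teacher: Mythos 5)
Your overall plan---reduce everything to a cut lemma proved by induction on the complexity of the cut-formula, with inner inductions on reduction lengths---is indeed the strategy of this paper: Theorem \ref{subst} plays the role of your head-cut lemma, organized as a substitution lemma and driven by a 5-tuple whose leading component is the complexity of the type of the substituted variables. But the proposal has two genuine gaps, and the one you flag is not the worse one. For the classical cut you correctly observe that $a$ is retyped from $\neg(A\f B)$ to $\neg B$ and that this decrease must drive the induction, but you then concede that you do not have the statement of the auxiliary lemma. That lemma is essentially Theorem \ref{subst} together with the Claim inside case 3 of its proof (that $T[a:=^*[x_1.P_1,x_2.P_2]]\in SN$ under SN hypotheses on the pieces), and its proof requires both the characterization of $SN$ by head reduct and arguments (Theorem \ref{carSN}) and a rather intricate case analysis; admitting that this is ``the step requiring the greatest care'' means the proof is not done.

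The second gap is the one you dismiss in half a sentence: ``a permutative cut leaves the cut-formula unchanged but strictly decreases the structural size, so the auxiliary induction closes the case.'' It does not. After $(M\;[x_1.N_1,x_2.N_2]\;\ep)\tr(M\;[x_1.(N_1\;\ep),x_2.(N_2\;\ep)])$ the reduct is no longer of the shape $(R\;\ep)$ covered by your lemma with the same cut-formula: it is itself a potential cut whose cut-formula is the type $A_1\vee A_2$ of $M$, which---as the paper stresses---has nothing to do with the type of the conclusion, and may well be larger than the formula you are inducting on. Nor is the reduct's strong normalizability readable from that of its pieces, since $M$ may later reduce to an injection or a $\mu$-abstraction and interact with the newly created inner cuts $(N_i\;\ep)$. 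This is exactly the difficulty the paper isolates as crucial: it forces the permutative redex to be taken as the head redex, and the resulting obligation---that $(M\;[x_1.N_1,x_2.N_2]\;\ep\;\ov{V})\in SN$ follows from $(M\;[x_1.(N_1\;\ep),x_2.(N_2\;\ep)]\;\ov{V})\in SN$---is Theorem \ref{app}, whose proof occupies all of Section 5 and needs a marking and translation apparatus ($\tst$, correct terms, $T_1$, $T_2$) to track the travelling $\ep$ through an arbitrary infinite reduction. Your ``auxiliary structural size controlling the permutative reducts'' would have to do the work of that entire section, and nothing in the proposal indicates how it would.
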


The proof is an immediate corollary of theorem \ref{subst}: if
$M,N \in SN $, then $M[x:=N] \in SN$.

The proof of theorem \ref{subst}  uses a characterization of
strongly normalizable (theorem \ref{carSN}): a term is in $SN$ iff
its arguments and head reduct (see definition \ref{def}) are in
$SN$. This  theorem  needs another result (theorem \ref{app})
which is, intuitively, very clear but whose formal proof needs
some work.

The main difficulties  are the following:

- The first one is  minor: in the $\l$-calculus, each term has a
unique {\em head}, either a head variable or a head redex. Due to
the connective $\et$, this is no longer true here and a term may
have both a head variable and a head redex. This is treated by
showing that it is enough to consider only the simple terms (see
definition \ref{simp}).

- The second one is crucial and due to the presence of critical
pairs such as $(\m a  M  \; [y_1.N_1,y_2.N_2] \; \ep)$. We can
choose as head redex either the classical one or the permutative
one. If we choose the classical one, the proof of theorem
\ref{carSN} will be easy but the proof of theorem \ref{subst} does
not work because, in the rule $\vee_e$, the type of the main
hypothesis  has nothing to do with the type of its conclusion. We
thus have to choose, as head redex, the permutative one but then,
theorem \ref{carSN} needs the difficult theorem \ref{app}. For the
same reason (the rule $\vee_e$), the proof of theorem \ref{subst}
needs a rather complex induction: we use a 5-tuple of integers.
Note that E. Tahhan Bittar \cite{tahhan} has given a proof of the
strong normalization of the {\em sequent
  calculus} by using essentially the same 5-tuple of integers.

\medskip
\noindent {\em Remark} \hspace{.1cm} It is also for simplicity of
proofs that, in the totality of this section, we only consider
typed terms and thus, for example, that terms such as $(\l x
M\;[x_1.N_1,x_2.N_2])$ are not allowed because they, obviously,
cannot be typed since the type of $\l x M$ must be an implication.
Actually, theorems \ref{carSN} and \ref{app} would also be true
for untyped terms i.e. even if terms as $(\l x
M\;[x_1.N_1,x_2.N_2])$ were allowed and its proof will be
essentially the same since such a term is strongly normalizable
iff $M, N_1, N_2$ also are strongly normalizable.

\section{Characterization of strongly normalizable terms}

\begin{definition}\label{simp}
\begin{enumerate}
\item A term $M$ is simple if $M$  either is a variable or
  an application.
\item The set of contexts is given by the following grammar:

$$C:=*_i \mid  \l x C \mid \o_i C \mid \< C_1, C_2\> \mid \m a C$$

\item If $C$ is a context with holes $*_1,...,*_n$ and $M_1,...,M_n$
  are terms, $C[M_1,...,M_n]$ is the term obtained by
  replacing each $*_i$ by $M_i$.
\end{enumerate}
\end{definition}

\begin{lemma}\label{simple} Each term $M$ can be uniquely written as $C[M_1,...,M_n]$
  where $C$ is a context  and $M_1,...,M_n$ are simple terms.
\end{lemma}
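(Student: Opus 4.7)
The plan is a straightforward structural induction on $M$, with existence and uniqueness handled simultaneously. I would split on the outermost constructor of $M$, matching it against the eight productions of the grammar for ${\cal T}$.

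Three of those productions put a simple term at the top: a variable $x$, an ordinary application $(N\;\ep)$, and a classical application $(a\;N)$. In each of these the decomposition is forced to be $C=*_1$ with $M_1=M$, because the context grammar produces none of these shapes at its root. This handles existence and uniqueness together in the base cases.

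For the four remaining productions ($\l x N$, $\o_i N$, $\m a N$, and $\< N_1,N_2\>$) I would apply the induction hypothesis to the immediate subterm(s) and then wrap the resulting context(s) with the matching context constructor. For pairing, the two contexts returned by the IH carry their own hole-numberings, so I would first shift the indices of the second one to continue after those of the first, giving $M=\<C_1,C_2\>[P_1,\ldots,P_p,Q_1,\ldots,Q_q]$. Uniqueness again follows by matching the outermost symbol of $M$: in these four cases $M$ is not simple, so $C=*_1$ is impossible, and exactly one context production starts with the right head symbol; the inner subcontext and the list of simple terms are then unique by the IH.

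I do not foresee any real obstacle here: the lemma is essentially bookkeeping, meant to license talking about a unique ``simple part'' of a term in later arguments (where head variables and head redexes need to be well-defined). The only mild subtlety is the hole-renumbering in the pairing case, which is purely cosmetic.
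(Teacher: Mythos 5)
Your proposal is correct and follows exactly the route the paper intends: the paper's entire proof is the single line ``By induction on $M$,'' and your case analysis on the outermost constructor (with the hole-renumbering in the pairing case and uniqueness read off from the head symbol) is precisely the bookkeeping that one-liner leaves implicit.
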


\begin{proof} By induction on $M$.
\end{proof}

\begin{lemma} \label{redSN}Let $C$ be a context and
  $M_1,...,M_n$ be terms. Then $C[M_1,...,M_n] \in SN$ iff
$M_1,...,M_n \in SN$.
\end{lemma}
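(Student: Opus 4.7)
The plan is to proceed by induction on the structure of the context $C$. The key observation is that none of the five reduction rules (logical, projection, case, permutative, classical) has any of the context constructors $\l x$, $\o_i$, $\<\cdot,\cdot\>$, $\m a$ as its outermost shape: every left-hand side of a reduction rule is an application whose left component is a term of a specific form. Since a context never produces an application at its root, no reduction of $C[M_1,\ldots,M_n]$ can fire at the top of $C$, and every one-step reduction takes place strictly inside one of the $M_i$'s or inside a strictly smaller sub-context with an appropriate sub-list of the $M_i$'s plugged in.

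After dispatching the trivial base case $C = *_1$, I would handle each inductive case in turn. For $C = \l x C'$, $C = \o_i C'$ and $C = \m a C'$, a one-step analysis shows that the reductions starting from $C[\vec{M}]$ are exactly the reductions starting from $C'[\vec{M}]$ with the outer constructor preserved, so $C[\vec{M}] \in SN$ iff $C'[\vec{M}] \in SN$, and the induction hypothesis closes the case. For $C = \<C_1, C_2\>$ I would partition the holes of $C$ according to the two sub-contexts, write $C[\vec{M}] = \<C_1[\vec{M'}], C_2[\vec{M''}]\>$, and observe that any reduction occurs entirely in one component; hence $C[\vec{M}] \in SN$ iff both $C_1[\vec{M'}]$ and $C_2[\vec{M''}]$ are in $SN$, and two applications of the induction hypothesis finish the argument.

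I do not foresee a serious obstacle. The only point deserving a moment of care is the $\m a$ case: one might fear that the classical cut $(\m a M \; \ep) \tr \m a M[a:=^* \ep]$ could fire at the root of $\m a C'[\vec{M}]$, but that rule requires $\m a M$ to sit in the function position of an application, and the context $\m a C'$ has nothing to the right of $\m a$. The same kind of remark excludes spurious projections or case splits being triggered at the root of $\<C_1,C_2\>$ and $\o_i C'$. Thus the structural induction proceeds uniformly, with the only real content being the routine verification that the outer constructor is inert with respect to the reduction rules.
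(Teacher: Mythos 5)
Your proof is correct and follows the same route as the paper, which simply states ``by induction on $C$'': the whole content is the observation that the context constructors $\lambda x$, $\omega_i$, $\langle\cdot,\cdot\rangle$, $\mu a$ never form the root of a redex, so every reduction of $C[M_1,\ldots,M_n]$ happens inside some $M_i$. Your explicit check of the $\mu a$ case (that the classical cut needs $\mu a M$ in function position of an application) is exactly the point the paper leaves implicit.
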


\begin{proof} By induction on $C$.
\end{proof}

\begin{definition}[and notation]
A (possibly empty) sequence $\ov{N}=N_1,...,N_n$  of elements of
${\cal E}$
 is nice if each $N_i \in
{\cal{T}} \cup \{\pi_1, \pi_2\}$ except possibly
  for $i=n$. If $M$ is a
 term,  $(M \; N_1...N_n)$
will be denoted as  $(M
  \;\ov{N})$.

\end{definition}

\begin{lemma}[and definition]\label{def}
Let $M$ be a simple term.

\begin{enumerate}
  \item Then $M$ can be uniquely written  as
one of the cases of the figure below where $\ov{T}=T_1, ..., T_n$
is a nice sequence and, in case (4) and (5), $\ep \ \ov{T}$ is
also nice, i.e. if $\ep=[y_1.N_1,y_2.N_2]$ then $\ov{T}$ is empty.
  \item The set of arguments of $M$ (denoted as $arg(M)$) and the head
of $M$ (denoted as $hd(M)$), either a redex  or a variable, are
defined by the figure below.

  \item The head reduct of $M$ (denoted as $hred(M)$) is the term obtained by reducing, if any,
   the head redex of $M$.
\end{enumerate}

\begin{center}
\medskip
\hspace{-1 cm}
\begin{tabular}{|r|l|c|c|}
\hline  & $M$  &  $hd(M)$  &  $arg(M)$\\
\hline  0& $(x \; \ov{T})$ or $(a \; T)$ &  $x$ or $a$ & $\{T_1, ..., T_n\}$  or $T$\\
\hline 1 & $(\lambda x N \; O \; \ov{T})$  &   $(\lambda x N
\, O)$ &  $\{O\}$\\
\hline 2 & $(\<N_1,N_2\> \; \pi_i\; \ov{T})$  &
$(\<N_1,N_2\> \; \pi_i)$ &  $\{N_1, N_2\}$  \\
\hline 3 & $(\o_iN \; [x_1.O_1,x_2.O_2])$  &   $M$ &  $\{N,O_1, O_2\}$\\
\hline 4 & $(\mu a N \; \ep \; \ov{T})$  &   $(\mu a N
\, \ep)$ &  $\{\ep\}$ \\
\hline 5 & $(N \; [x_1.O_1,x_2.O_2] \; \ep \; \ov{T})$ &
$(N \; [x_1.O_1,x_2.O_2] \; \ep)$   & $\emptyset$   \\
\hline
\end{tabular}
\end{center}

\end{lemma}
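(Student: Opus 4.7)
The proof proceeds by structural induction on $M$. Parts~(2) and~(3) are mere readouts once the unique decomposition in~(1) is established, so the substance of the argument lies in~(1).

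The strategy is to peel $\mathcal{E}$-arguments off the right. Because simple terms are either variables or applications $(P\;\ep)$ with $P\in\mathcal{T}$, any simple $M$ can be written uniquely as $(M_0\;X_1\cdots X_k)$ where $k\geq 0$ and $M_0$ belongs to $\{x,\,(a\;T),\,\lambda x N,\,\<N_1,N_2\>,\,\o_i N,\,\mu a N\}$ (i.e.\ $M_0$ is not itself of the form $(P'\;\ep')$). I then classify by $M_0$, invoking the typing rules to determine the first few $X_i$: a $\lambda x N$ head forces $X_1\in\mathcal{T}$ (case~1); a $\<N_1,N_2\>$ head forces $X_1=\pi_i$ (case~2); a $\o_i N$ head forces $X_1=[x_1.O_1,x_2.O_2]$, giving case~3 if $k=1$ and case~5 otherwise after absorbing $\o_i N$ into the $N$-slot; a $\mu a N$ head gives case~4; and a variable (or $(a\;T)$) head gives case~0. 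Whenever a non-final argument of an ostensibly case~0,~1,~2 or~4 decomposition is of the form $[x_1.O_1,x_2.O_2]$, the term must instead be reclassified into case~5 by moving the appropriate prefix into the $N$-slot.

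Uniqueness is the technical core, and the niceness condition in the statement is exactly what secures it: requiring $\ov T$ (and $\ep\;\ov T$ in cases~4 and~5) to be nice prevents cases~0,~1,~2 and~4 from ever absorbing a non-final $[\cdot,\cdot]$, thereby removing any ambiguity with case~5 and resolving the critical pair mentioned in the introduction (for instance $(\mu a N\;[y_1.N_1,y_2.N_2]\;\ep)$) in favour of the permutative head. The main obstacle I foresee is bookkeeping rather than a deep subtlety: in the inductive step, given the classification of $P$ in $M=(P\;\ep)$ by the induction hypothesis, one must check whether appending $\ep$ preserves niceness of the existing argument sequence and, if not, migrate the decomposition into case~5 accordingly. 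Enumerating these transitions is tedious but routine, and once done it both yields the decomposition of~(1) and makes $\mathit{hd}(M)$, $\mathit{arg}(M)$ and $\mathit{hred}(M)$ well-defined for parts~(2) and~(3).
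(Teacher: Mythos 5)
Your proposal is correct and follows essentially the same route as the paper: both decompose the simple term into a non-application head followed by its spine of arguments, use typing to pin down the first argument for each kind of head, and reclassify into case~5 by locating the last non-final $[x_1.O_1,x_2.O_2]$ when the argument sequence fails to be nice, with uniqueness secured by exactly the niceness observation you make. The only cosmetic difference is that you phrase the bookkeeping as a structural induction on $M$ while the paper does the case split on the spine directly.
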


\medskip

 \begin{proof}
 Since $M$ is simple, and for trivial typing reasons, it looks like either (a) $(x \; \ov{S})$ or $(a \; S)$
 or (b)
 $(\lambda x N \; O \; \ov{S})$
or (c) $(\<N_1,N_2\> \; \pi_i \; \ov{S})$ or (d) $(\o_iN \;
[x_1.O_1,x_2.O_2] \; \ov{S})$ or (e) $(\mu a N \; \ep \; \ov{S})$.
If $\ov{S}$ is empty the result is clear.

Otherwise, assume first $\ov{S}$ is nice. The cases (a), (b) and
(c) are clear. Case (d) gives (5). Case (e) gives (5) if
$\ep=[y_1.N_1,y_2.N_2]$ or (4) otherwise.

Assume finally $\ov{S}$ is not nice. Then $\ov{S}$ can be written
as $\ov{S_1} [y_1.N_1,y_2.N_2] \ov{S_2}$ where $\ov{S_2}$ is nice
and non empty. It is then easy to see that, in all cases, this
gives (5) where $\ov{S_2}= \ep \ov{T}$.

For uniqueness, check easily (by looking wether $\ov{T}$ has an
$[y_1.N_1,y_2.N_2]$ or not) that if $M$ is in case 0 to 4 it
cannot also be in case 5.
\end{proof}

\begin{theorem}\label{carSN} Let $M$ be a simple term. If $M$ has an head redex, then $M \in SN$
  iff $arg(M) \subset SN$ and $hred(M) \in SN$. Otherwise, $M \in SN$
  iff $arg(M) \subset SN$.
\end{theorem}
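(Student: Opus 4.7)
The forward implication is immediate: any subterm of an $SN$ term is $SN$, so $arg(M) \subset SN$, and any one-step reduct of an $SN$ term is $SN$, so $hred(M) \in SN$ when it exists. The content of the theorem is in the converse direction. I would prove this by well-founded induction using the classification of lemma \ref{def}: given $M$ simple with $arg(M) \subset SN$ and, where applicable, $hred(M) \in SN$, I enumerate the one-step reducts $M \tr M'$. Either $M' = hred(M)$ and we are done by hypothesis, or the reduction takes place strictly inside a subterm of $M$; in the latter case I re-express $M'$ via lemma \ref{def}, verify that its arguments and head reduct are still $SN$, and apply the induction hypothesis at a strictly smaller measure.

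For cases $0$ through $4$, the natural measure is $\sum_{T \in arg(M)} \nu(T) + \nu(hred(M))$, with $\nu$ denoting the maximum length of a $\tr$-reduction and the second summand omitted when no head redex exists. A reduction inside some $T \in arg(M)$ strictly decreases $\nu(T)$ and can be mirrored inside $hred(M)$, so the measure strictly drops. Some care is needed when a reduction inside $N$ in case $3$ or $4$ changes the principal constructor at the head, but in each such case $M'$ still falls into one of the cases of lemma \ref{def}, and its components remain $SN$ since they appear as subterms of the original $hred(M) \in SN$.

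The genuine obstacle, as the introduction warns, is case $5$: for $M = (N \; [x_1.O_1, x_2.O_2] \; \ep \; \ov{T})$ one has $arg(M) = \vi$, so the measure above collapses to $\nu(hred(M))$; the permutative head reduct is again of case $5$ whenever $\ov{T}$ is nonempty, reductions $\ep \tr \ep'$ duplicate into two copies inside $hred(M')$, and reductions $N \tr N'$ may shift the structural case. Rather than fight these issues with an ad hoc refined measure, I would appeal to theorem \ref{app}, which is exactly the auxiliary lemma isolated for this purpose: it allows one to transfer $SN$ from $hred(M)$ together with the constituent subterms $N$, $O_1$, $O_2$, $\ep$ and $\ov{T}$ back to $M$ itself. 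The substantive difficulty of the proof thus reduces to the proof of theorem \ref{app}; once that is granted, case $5$ closes cleanly and the induction for theorem \ref{carSN} terminates.
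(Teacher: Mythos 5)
Your proposal is correct and follows essentially the same route as the paper: for cases $0$--$4$ of lemma \ref{def} the key point is that internal reductions project onto reductions of $hred(M)$ (this ``mirroring'' is exactly lemma \ref{l7}, which the paper invokes explicitly, phrasing the argument as ``any infinite reduction must eventually fire the head redex, after which the term is a reduct of $hred(M)$'' rather than as your induction on $\sum\nu(T)+\nu(hred(M))$ --- the two formalizations are interchangeable). Case $5$ is, in both your proof and the paper's, deferred verbatim to theorem \ref{app}, and your diagnosis of why the measure argument collapses there matches the paper's own discussion.
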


\begin{proof}
 The case  of an head variable is trivial.
Case 1 of the figure of lemma \ref{def} is done as follows. Since
$hred(M) \in SN$, $N$ and $\ov{T}$ are in $SN$. Thus and since
$\ov{T}$ is nice, an infinite reduction of $M$ must look like: $M
\tr^* (\lambda x N_1 \; O_1 \; \ov{T_1}) \tr (N_1[x:=O_1] \;
\ov{T_1}) \tr ...$. The contradiction comes from the fact (see
lemma \ref{l7} below) that $hred(M) \tr^* (N_1[x:=O_1] \;
\ov{T_1})$. Cases 2, 3, 4 are similar.

Case 5 is theorem \ref{app} below.
\end{proof}

\begin{lemma}\label{l7}
Let $M, N \in \ct$. Assume $M \tr M'$ and  $N \tr N'$. Let
$\sigma$ (resp. $\sigma'$) be either $[x:=N]$ or $[a:=^*N]$ (resp.
$[x:=N']$ or $[a:=^*N']$).  Then $M[\sigma] \tr M'[\sigma]$ and
$M[\sigma] \tr^* M[\sigma']$.
\end{lemma}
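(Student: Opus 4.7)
The plan is to derive both assertions by structural induction, using a small toolkit of substitution-commutation lemmas proved beforehand. Concretely, I would first establish four identities: (i) $P[y:=Q][x:=N] = P[x:=N][y:=Q[x:=N]]$, (ii) $P[b:=^*Q][x:=N] = P[x:=N][b:=^*Q[x:=N]]$, (iii) $P[y:=Q][a:=^*N] = P[a:=^*N][y:=Q[a:=^*N]]$, and (iv) $P[b:=^*Q][a:=^*N] = P[a:=^*N][b:=^*Q[a:=^*N]]$, under the usual variable conventions ($x \neq y$, $a \neq b$, bound variables fresh for $N$ and $Q$). Each is proved by a routine induction on $P$. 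The only slightly subtle step is the $(a\;R)$-case of (iii) and (iv), where the definition of $[a:=^*N]$ replaces $(a\;R)$ by $(a\;(R[a:=^*N]\;N))$, and one must verify that the extra occurrence of $N$ that is inserted behaves correctly under the other substitution.

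For the first assertion $M[\s] \tr M'[\s]$, I would induct on the derivation of $M \tr M'$. The contextual closure cases are immediate from the induction hypothesis together with the observation that $\s$ distributes homomorphically through every term constructor; in particular for $(a\;\cdot)$ under $\s = [a:=^*N]$, a reduction inside the body is preserved after the argument $N$ is inserted. For the five root reductions: the three logical-cut rules reduce to lemma (i) or (iii); the permutative cut rule follows from the fact that $\s$ commutes with the rewriting of $[x_1.N_1, x_2.N_2]$ to $[x_1.(N_1\;\ep), x_2.(N_2\;\ep)]$; and the classical cut rule $(\m b P\;\ep) \tr \m b P[b:=^*\ep]$ uses lemma (ii) when $\s=[x:=N]$, or lemma (iv) when $\s=[a:=^*N]$ (the bound variable $b$ being taken fresh for $a$ and $N$).

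For the second assertion $M[\s] \tr^* M[\s']$, I would induct on $M$. The two witnessing base cases are $M = x$ with $\s = [x:=N]$, where $M[\s] = N \tr N' = M[\s']$, and $M = (a\;P)$ with $\s = [a:=^*N]$, where $M[\s] = (a\;(P[\s]\;N))$ reduces to $(a\;(P[\s']\;N'))$ by the IH on $P$ followed by the single step $N \tr N'$. In every other case the substitution propagates into the immediate subterms and the IH applies directly; the total reduction length is bounded by the number of occurrences of $x$ in $M$ (resp.\ of subterms $(a\;\cdot)$ in $M$). The only real obstacle is the careful bookkeeping required to verify the auxiliary substitution lemmas, especially (iv); everything else is a straightforward case analysis.
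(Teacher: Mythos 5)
Your proof is correct and matches the paper's intent: the paper's own proof of this lemma is just the word ``Straightforward'', and the argument you spell out --- the four substitution-commutation identities (with the freshness caveat for the occurrence of $N$ inserted by $[a:=^*N]$ at subterms $(a\;R)$), an induction on the derivation of $M \tr M'$ for the first claim, and an induction on $M$ for the second --- is exactly the routine verification being left to the reader. No gaps.
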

\begin{proof}
Straightforward.
\end{proof}

\begin{theorem}\label{app}
Assume the sequence $\ep \; \ov{V}$ is nice and $S_2=(N \; [x_1.
\; (N_1 \; \ep), x_2. \; (N_2 \; \ep)]$ $ \; \ov{V}) \in SN$. Then
$S_1=(M \; [x_1. \; N_1, x_2. \; N_2] \; \ep \; \ov{V}) \in SN$.
\end{theorem}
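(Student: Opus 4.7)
My plan is to prove $S_1 \in SN$ by induction on a well-chosen measure on the pair $(S_2, M)$ --- primarily the length $h(S_2)$ of the longest reduction starting from $S_2$ (finite by hypothesis), and secondarily a structural measure on $M$ (the paper hints at a $5$-tuple of integers). Since $S_2 \in SN$, each of its subterms $M$, $N_1$, $N_2$, $\ep$, $V_j$ is also in $SN$ and available to the induction. To prove $S_1 \in SN$ it suffices, by the standard principle that a term is $SN$ whenever every one-step reduct is $SN$, to show $S_1' \in SN$ for every $S_1 \tr S_1'$; I case-split on which redex is contracted.

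For an \emph{internal} step inside $M$, $N_i$, $\ep$, or some $V_j$, the term $S_2$ admits a matching reduction (two steps when $\ep$ is reduced, since $\ep$ occurs twice in $S_2$) to some $S_2'$ with strictly smaller $h$-value, and the induction hypothesis applied to $S_2'$ yields $S_1' \in SN$. For the \emph{outer permutative} step $S_1 \tr S_2$ one directly has $S_1' = S_2 \in SN$. For the \emph{head logical} case $M = \o_i P$, the reduct $S_1' = (N_i[x_i := P] \; \ep \; \ov{V})$ coincides (by the usual variable convention) with the one-step logical reduct of $S_2$, so it is $SN$. For the \emph{inner permutative} case $M = (P \; [y_1.O_1, y_2.O_2])$, the reduct $S_1' = (P \; [y_1.Q_1, y_2.Q_2] \; \ep \; \ov{V})$ with $Q_i = (O_i \; [x_1.N_1, x_2.N_2])$ again has the $S_1$-shape but with strictly simpler head term $P$; the associated $S_2^{\star} = (P \; [y_1.(Q_1 \; \ep), y_2.(Q_2 \; \ep)] \; \ov{V})$ reduces by inner permutatives to a reduct of $S_2$, which lets me control its measure and invoke the induction hypothesis on the secondary component.

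The main obstacle is the \emph{head classical} case $M = \m a P$, where the reduct $S_1' = (\m a P^\sharp \; \ep \; \ov{V})$ with $P^\sharp = P[a :=^* [x_1.N_1, x_2.N_2]]$ no longer has the $(\cdot \; [x_1.\cdot, x_2.\cdot] \; \ep \; \ldots)$ shape targeted by the induction hypothesis. To close it, I would observe that $S_1' \tr^+ T$, where $T = (\m a P[a :=^* [x_1.(N_1 \; \ep), x_2.(N_2 \; \ep)]] \; \ov{V})$ is the classical reduct of $S_2$ (hence $T \in SN$), reached via one further classical step pushing $\ep$ inside and then a permutative step beneath each $(a \; \cdot)$-occurrence. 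Since $SN$ is not preserved under anti-reduction, this alone does not give $S_1' \in SN$, so I would combine it with a sub-induction --- on the number of free occurrences of $a$ in $P$ together with $h(P)$ --- showing that any infinite reduction from $S_1'$ either merges into the finite reduction tree of $T$ or forces some strict subterm to reduce infinitely, contradicting its $SN$-ness. This case is precisely where a richer multi-component measure becomes essential, and I expect the bulk of the technical work to live there.
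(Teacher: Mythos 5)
Your overall strategy --- an induction on $(h(S_2),\ \mbox{structure of } M)$ with a case split on the contracted redex --- is the natural first attempt, and your treatment of the internal steps, the outer permutative step and the head logical case is fine. But the two cases you yourself flag as delicate are genuinely not closed, and they are exactly where this induction breaks down. In the inner permutative case, to invoke the induction hypothesis on $S_1'=(P\;[y_1.Q_1,y_2.Q_2]\;\ep\;\ov{V})$ you must first know that its companion $S_2^{\star}=(P\;[y_1.(Q_1\;\ep),y_2.(Q_2\;\ep)]\;\ov{V})$ is in $SN$ and that $h(S_2^{\star})\leq h(S_2)$. What you actually establish is only that $S_2^{\star}$ \emph{reduces to} a reduct of $S_2$; since $SN$ is not closed under anti-reduction this yields neither fact, and each $(Q_i\;\ep)=(O_i\;[x_1.N_1,x_2.N_2]\;\ep)$ is itself a fresh instance of the theorem sitting inside a context of the form $(P\;[\ldots]\;\ov{V})$, for which $SN$ of the pieces does not imply $SN$ of the whole. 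In the head classical case the situation is worse: after $(\m a P\;[x_1.N_1,x_2.N_2])\tr \m a\, P[a:=^*[x_1.N_1,x_2.N_2]]$, a copy of $[x_1.N_1,x_2.N_2]$ (and, after the next classical step, of $\ep$) is planted at every occurrence of $(a\;\cdot)$ inside $P$, producing unboundedly many nested instances of the $S_1$-configuration; ``merges into the finite reduction tree of $T$ or forces some strict subterm to reduce infinitely'' is precisely the statement that needs proof, and none of the measures you propose is shown to decrease across these duplications.

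The paper sidesteps the direct induction altogether. It marks the occurrences of the $N_i$ (as $\st{N_i}$) and of $\ep$ (as $\bo{\ep}$), lifts any infinite reduction of $S_1$ to a reduction of the marked term (lemma \ref{app1}), and projects each marked term by $T_2$ onto a reduct of $S_2$ (lemma \ref{app1'}). Since $S_2\in SN$, the projection eventually stabilizes, and from then on every step is one of the bookkeeping reductions $\trs$, which strictly shrink the total distance between each $\st{N_i}$ and its associated $\bo{\ep}$ (lemma \ref{app2}) --- a contradiction. The whole point of the marking is to keep track, uniformly, of where the duplicated $[x_1.N_1,x_2.N_2]$'s and $\ep$'s travel under classical and permutative steps, which is exactly the bookkeeping your sketch leaves open. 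To salvage a direct induction you would need an auxiliary measure that survives the $\m$-duplication, and it is not clear such a measure exists without something equivalent to the paper's simulation argument.
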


\begin{proof}
See section 5.
\end{proof}

\section{Proof of theorem \ref{SN}}

By induction on $M$.
 The cases $x$, $\l xN$, $\langle N, O\rangle$, $\omega
_{i}N$, $(a \; N)$ and $\mu aN$ are immediate. The last case is
 $M=(N \; \ep)=(x \; \ep)[x:=N]$ where $x$ is a fresh
variable and the result follows from the induction hypothesis and
theorem \ref{subst} below. \hfill $\square$

\begin{definition}
Let $M$ be a term. Then, $cxty(M)$ is the number of symbols
occurring in $M$ and, if $M \in SN$,
 $\eta(M)$ is the length of the longest reduction of $M$.
\end{definition}

In lemma \ref{utile} and theorem \ref{subst} below,  $\sigma$
denotes a substitution of the form
  $[x_i:=N_i \; / \; i=1...n]$, i.e. we substitute only intuitionistic variables.

\begin{lemma}\label{utile}
 Let $M$ be a simple term with an head redex and $\sigma$ be a substitution. Then,
  $hd(M[\sigma]) = hd(M)[\sigma]$, $arg(M[\sigma]) =
  arg(M)[\sigma]$ and $hred(M[\sigma]) = hred(M)[\sigma]$.
\end{lemma}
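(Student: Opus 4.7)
The plan is a straightforward case analysis on the five cases of lemma~\ref{def}; since $M$ has a head redex, case 0 is excluded and only cases 1--5 need be treated. The key observation underpinning everything is that $\sigma$ substitutes only intuitionistic variables $x_i$, so it does not modify any classical variable, any binder of the form $\lambda y$, $\mu a$, $[y_1.\_, y_2.\_]$, nor any of the constructors $\langle \cdot, \cdot \rangle$, $\o_i$, $\pi_i$ appearing at the head of $M$.

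First I would check that niceness is preserved under $\sigma$: an element of ${\cal E}$ lies in ${\cal T} \cup \{\pi_1, \pi_2\}$ iff its $\sigma$-image does, since $\sigma$ cannot turn a term into a form $[y_1.N_1,y_2.N_2]$. Hence the unique decomposition of $M[\sigma]$ afforded by lemma~\ref{def} lies in the same case as that of $M$, and the identities $hd(M[\sigma]) = hd(M)[\sigma]$ and $arg(M[\sigma]) = arg(M)[\sigma]$ fall out immediately from the explicit definitions in the figure of lemma~\ref{def} together with the fact that $\sigma$ commutes with each term constructor.

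For the equality $hred(M[\sigma]) = hred(M)[\sigma]$, I would invoke the standard substitution lemma in each case. For instance, in case~1, $hred(M) = (N[x:=O] \; \ov{T})$ and $hred(M[\sigma]) = (N[\sigma][x:=O[\sigma]] \; \ov{T}[\sigma])$; these coincide by the usual commutation of intuitionistic substitutions, assuming a Barendregt convention ensuring that the bound variable $x$ is not free in $\sigma$. Cases 2, 3, and 5 are handled in exactly the same style (in case~5 the permutative step $(N \; [x_1.O_1, x_2.O_2] \; \ep) \tr (N \; [x_1.(O_1 \; \ep), x_2.(O_2 \; \ep)])$ commutes with $\sigma$ because all the involved terms sit beneath the substitution).

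The only slightly subtle point, and hence the closest thing to an obstacle, is case~4, the classical cut, where $hred(M) = (\mu a \, N[a :=^* \ep] \; \ov{T})$: one must check that $[a :=^* \ep]$ commutes with $\sigma$. Since $\sigma$ acts only on intuitionistic variables while $[a :=^* \ep]$ only rewrites subterms of the form $(a \; P)$ into $(a \; (P \; \ep))$, the two operations touch disjoint parts of the syntax and trivially commute (again under the Barendregt convention, so that $a$ is not captured and no $x_i$ is captured by a $\mu a$). Beyond this routine verification, the entire lemma is of a purely ``definitional'' nature and I expect no genuine difficulty.
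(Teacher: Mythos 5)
Your case analysis is correct and is exactly the routine verification the paper elides (its proof of this lemma is just ``Immediate''): $\sigma$ substitutes only intuitionistic variables, so it preserves the case of the decomposition of lemma~\ref{def} and commutes with the contractum of each kind of head redex, including $[a:=^*\ep]$ in the form $N[a:=^*\ep][\sigma]=N[\sigma][a:=^*\ep[\sigma]]$. No discrepancy with the paper.
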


\begin{proof} Immediate.
\end{proof}

\begin{theorem} \label{subst}
Let $M \in SN$ be a term and $\sigma$ be a substitution. Assume
that the substituted variables all have the same type and, for all
$x$, $\sigma(x) \in SN$. Then $M[\sigma] \in SN$.
\end{theorem}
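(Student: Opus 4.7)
The plan is to prove Theorem \ref{subst} by lexicographic induction on a 5-tuple of non-negative integers associated to $(M,\sigma)$. The leading component must be the complexity of the common type $A$ of the substituted variables, since the only thing that can pay for a freshly created head redex is a strict drop in the cut-formula size. Natural subsequent components, in decreasing priority, are $\sum_i\eta(\sigma(x_i))$, then $\eta(M)$, then $cxty(M)$, with one further refinement to cleanly handle reductions inside the $\sigma(x_i)$'s.

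Using Lemma \ref{simple}, write $M=C[M_1,\dots,M_k]$ with the $M_j$'s simple; then $M[\sigma]=C[M_1[\sigma],\dots,M_k[\sigma]]$, so by Lemma \ref{redSN} it suffices to show each $M_j[\sigma]\in SN$. Since $\eta(M_j)\le\eta(M)$ and $cxty(M_j)\le cxty(M)$, I may assume $M$ itself is simple. If $M$ is a variable the conclusion is either trivial or is one of the hypotheses; otherwise $M[\sigma]$ is again an application, and Theorem \ref{carSN} reduces the task to showing that the arguments of $M[\sigma]$ and, if it exists, its head reduct lie in $SN$. For the arguments: those obtained by substituting an argument of $M$ are smaller in $cxty$ and yield to the IH, while those arising as subterms of some $\sigma(x_i)$ (only possible when $M=(x_i\,\ov{T})$) are in $SN$ by hypothesis.

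The analysis of the head reduct splits into two subcases. If $M$ already has a head redex (Lemma \ref{def}, cases 1--5) that is not affected by $\sigma$, then by Lemma \ref{utile} we have $hred(M[\sigma])=hred(M)[\sigma]$, and since $\eta(hred(M))<\eta(M)$ the IH applies with the first two components of the tuple unchanged and the third strictly smaller. The critical subcase is when $M=(x_i\,\ov{T})$ (Lemma \ref{def}, case 0) but $M[\sigma]$ acquires a head redex because $\sigma(x_i)$ is headed by $\lambda y$, $\langle\cdot,\cdot\rangle$, $\omega_j$, or $\mu a$. The cut-formula of this new redex is a strict subformula of $A$, so after one step the head reduct can be presented as $R[\sigma']$ where $R\in SN$ and $\sigma'$ substitutes variables of type strictly smaller than $A$; the IH then applies with a strict drop in the primary component.

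I expect this last step to be the main obstacle. Arranging $R$ and $\sigma'$ so that all types match up, $R$ is visibly in $SN$, and the measure really does drop, requires some care---particularly in the $\mu a$ subcase, where the head reduction is a classical cut and the substitution $[a:=^*\cdot]$ modifies every subterm $(a\,N)$, so its structural analysis differs from the logical cases. Reconciling these several flavours of ``newly created cut'' under one uniform lexicographic induction is the true crux of the proof, and precisely the reason the 5-tuple is needed.
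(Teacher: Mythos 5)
Your overall strategy coincides with the paper's: a lexicographic induction whose leading component is the size of the common type of the substituted variables, reduction to simple terms via Lemmas \ref{simple} and \ref{redSN}, and then Theorem \ref{carSN} to split into arguments, a pre-existing head redex (handled by Lemma \ref{utile} and the decrease of $\eta(M)$), and a head redex created by the substitution. Up to that point the proposal is sound (modulo checking that your particular ordering of the tuple survives every case; the paper places $\eta(M),cxty(M)$ \emph{before} the measures on $\sigma$, and uses an occurrence-weighted $\eta(\sigma)$, which is tuned to the cases below).

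The gap is in the treatment of the created head redexes, which is where all the real work lies. First, your enumeration of the possible heads of $\sigma(x)$ --- $\l y$, $\<\cdot,\cdot\>$, $\o_j$, $\m a$ --- omits the case $\sigma(x)=(N_3\;[y_1.N_1,y_2.N_2])$, where $M[\sigma]$ falls into case 5 of Lemma \ref{def} and the created head redex is a \emph{permutative} one; the paper singles this out (together with Theorem \ref{app}) as the central difficulty of the whole development, and it needs its own case analysis on the shape of $N_3$. Second, the claim that ``after one step the head reduct can be presented as $R[\sigma']$ where $\sigma'$ substitutes variables of type strictly smaller than $A$'' is false in the $\m a$ subcase: for $M=(x\;[x_1.M_1,x_2.M_2])$ and $\sigma(x)=\m a N$, the head reduct is $\m a\,N[a:=^*[x_1.P_1,x_2.P_2]]$, i.e.\ a \emph{classical} substitution $[a:=^*\cdot]$, which is not an instance of the theorem being proved (the statement only substitutes intuitionistic variables), and there is no immediate drop in the cut-formula. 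The paper must prove a separate Claim (that $T[a_i:=^*[x_1.P_1,x_2.P_2]]\in SN$ for $T,P_1,P_2\in SN$) by a nested induction on $(\eta(T),cxty(T))$, inside of which the main induction hypothesis is invoked only at the one point where the type genuinely decreases (the $\o_i$ subcase, where a term of type $A$ or $B$ replaces a variable whose witness had type $A\ou B$). You explicitly defer exactly this point as ``the true crux,'' so the proposal is an accurate plan of attack but not a proof: the decisive cases are asserted rather than established, and as stated one of the assertions does not hold.
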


\begin{proof}

The proof is by induction
  on $(lgt(\sigma)),\eta (M),cxty(M),\eta
 (\sigma),cxty(\sigma))$ where $lgt(\sigma)$
is the number of connectives in the type of the substituted
variables and $\eta(\sigma)$ (resp. $cxty(\sigma)$) is the sum of the
 $\eta(N)$ (resp. $cxty(N)$) for the
$N$ that are actually substituted, i.e. for example if
$\sigma=[x:=N]$ and $x$ occurs $n$ times in $M$, then
$\eta(\sigma)=n.\eta(N)$ and $cxty(\sigma)=n.cxty(N)$. The
induction hypothesis will be abbreviated as {\it IH}.

By the {\it IH} and lemmas \ref{redSN} and \ref{simple} we may
assume that $M$ is simple.
Consider then the various cases of lemma \ref{def}.

\begin{itemize}
\item If $M$ has an head redex:   by lemma \ref{utile} and the {\it IH},
  $arg(M[\sigma]) \subseteq SN$ since for each $N \in arg(M)$,
  $cxty(N) < cxty(M)$. By lemma \ref{utile},  $hred(M[\sigma])
  = hred(M)[\sigma]$ and thus, since $\eta(hred(M)) < \eta(M)$,   $hred(M[\sigma]) \in SN$
  follows from the {\it IH}.
\item  Otherwise, if the head variable is a classical variable or an intuitionistic  variable
not in the domain
of $\sigma$, the result is trivial.
\item  Otherwise, i.e $M= (x \; \ov{T})$
\begin{itemize}
\item If $hd(M[\sigma])$ is a variable, the result is trivial.
\item If $hd(M[\sigma]) = hd(\sigma(x))$:
  let $M' = z
  \; \ov{T}$ where $z$ is a fresh variable and $\sigma'$ be the substitution defined as follows
  $\sigma'(z)= hred(\sigma(x))$ and, for  the variables $y$ occurring in $\ov{T}$, $\sigma'(y)=\sigma(y)$.
   Then,
  $hred(M[\sigma]) = M'[\sigma']$ and thus, by the {\it IH}, $hred(M[\sigma]) \in SN $
  since $\eta(\sigma') < \eta(\sigma)$.
\item  Otherwise, the head redex has been created by the
substitution. The various cases are:
\end{itemize}
\end{itemize}
\begin{enumerate}
\item \label{1}  $M = (x \; O \ov{S})$ and $\sigma(x) = \l y
N$.  By the {\it IH},  $arg(M[\sigma]) \subseteq SN$ and thus, by
theorem \ref{carSN}, we have to show that $P=(N[y:=O[\sigma]]
\;\ov{S[\sigma])} \in SN$.  By the {\it IH},  $(z \;
\ov{S[\sigma]}) \in SN$ and since $lgt(O[\sigma]) < lgt(\l yN)$,
$N[y:=O[\sigma]] \in SN$ . Thus $P = (z \; \ov{S[\sigma]})[z
:=N[y:=O[\sigma]]] \in SN$ since $lgt(N[y:=O[\sigma]])<lgt(\l
yN)$.

\item  $M = (x \; \pi_i \ov{S})$ and $\sigma(x) =
  \<N_1,N_2\>$ or $M = (x \; [x_1.M_1,x_2.M_2])$ and $\sigma(x) = \o_iN$.  The proof is similar.

\item \label{2} $M = (x \; [x_1.M_1,x_2.M_2])$ and $\sigma(x) = \mu
  a N$. By the {\it IH},  $arg(M[\sigma])
  \subseteq SN$ and thus (by theorem \ref{carSN}) we have to show
   $P=\mu a N[a:=^*  [x_1.P_1,x_2.P_2]] \in SN$ where, for $i=1,2$,
  $P_i=M_i[\sigma]$. Since $cxty(M_i) < cxty(M)$, the fact that
  $P_i \in SN$ follows from the {\it IH}. The result is thus a particular case of the claim
  below.

\emph{Claim} \hspace{0.1cm} Let $P_{1},P_{2}, T\in SN$ and
$a_{1},...,a_{n}$ be
 variables of type $\lnot (A\ou B)$.  Let
$T[\t]$ denotes $T[a_{i}:=^* [P] \; / \; i=1...n]$ where $[P]$ is
an abbreviation for  $[x_{1}.P_{1},x_{2}.P_{2}]$. Then $T[\t]\in
SN$.

\emph{Proof} \hspace{0.1cm} By induction on $(\eta (T),cxty(T))$.
We
  may assume that $T$ is simple.  Consider the various cases of
  lemma \ref{def}.

\begin{itemize}
\item If $T$ has an head redex, the result follows immediately from {\it IH} and
lemma \ref{utile}.
\item Otherwise and if the head variable of $T$ is not in $\t$, the result is trivial.

\item Otherwise and because of the type of the
  $a_i$, $T = (a \; V)$ where $V \in {\cal T}$. It is thus enough to prove
  that $(V[\t] \; [P] )\in SN$ and, for that, it is enough to show
  that its head reduct $Q \in SN$. The various cases are:
\begin{itemize}

\item $V=\omega _{i}W$ and $Q=P_{i}[x_{i}:=W[\t]]$. By the \textit{IH},
$W[\t]\in SN$ since $cxty(W)<cxty (T)$ and thus, since
$lgt(W)<lgt(N)$, $Q \in SN$ follows from  the main \textit{IH}
(recall we are ``inside'' the proof of theorem \ref{subst},
$type(W)=A$ or $type(W)=B$ and $type(N)=A\vee B$).

\item $V=\mu bW$ and $Q=\mu bW[\t][b:=^*[P]]=\mu
  b \; W[\t^{\prime }]$ where $\t^{\prime}=\t \cup \lbrack b:=^*[P]]$. Since $cxty(W)<cxty(T)$, the result follows from
  the \textit{IH}.

\item  $V=(W\,\ep)$ and $\ep$ is not in the form
$[x_1.W_1,x_2.W_2]$. Then, the head redex of $(V[\t] \; [P])$ must
come from $V$ and  $Q=(V'[\t] \; [P])$ for some $V'$ such that $V
\tr V'$. Let $T'=(a \;\,V')$. Since $\eta (T')<\eta (T)$,
$T'[\t]\in SN$. But $T'[\t] \tr Q$ and thus $Q\in SN$.

\item $V=(W \; [x_1.W_1,x_2.W_2])$ and $Q=(W[\t] \; [x_1.(W_1[\t] \;
  [P]),x_2.(W_2[\t] \; [P])])$.  Let $T_{j}=(a\;W_{j})$. Since
  $cxty(T_{j})<cxty(T)$, by the \textit{IH}, $T_{j}[\t]\in SN$ and
  thus $(W_{j}[\t] \; [P])\in SN$ since $T_{j}[\t] \tr (W_{j}[\t] \;
  [P])$. By the \textit{IH}, since $cxty(W)<cxty(T)$, $W[\t]\in
  SN$. By theorem \ref{carSN}, it is thus enough to show that $Q'=hred(Q) \in SN$.

  If $hd(Q)$  comes from $W$, the result follows
  from the \textit{IH}.  Otherwise, the various cases are:
\begin{itemize}
\item $W=\omega _{i}W^{\prime }$ and  $Q' = (W_{i}[\t] \; [P])[x_{i}:=W'[\t]]$.
 Let $T'=(a \,W_{i}[x_{i}:=W'])$. Then $T=(a \;(\o_{i}W'\;
  [x_1.W_1,x_2.W_2]))\tr T'$. By the \textit{IH}, $T'[\t]\in SN$ and
  the result follows from the fact that $T'[\t]\rightarrow (W_{i} \;
  [P])[x_{i}:=W'][\t]=Q'$.
\item  If $W=\mu bW^{\prime }$ or $W=(W' \; [x_1.W'_1,x_2.W'_2])$: the proof is
similar.

\end{itemize}
\end{itemize}
\end{itemize}
\item $M = (x \; \ep \; \ov{T})$, $\ep \neq [x_1.M_1,x_2.M_2]$ and $\sigma(x) = \mu a N$. We prove exactly as
in case \ref{2} that $(\mu a N \; \ep [\sigma]) \in SN$. To prove
that $M[\sigma] \in SN$, it is enough to use the same trick as in
case 1: $M[\sigma]= (z \; \ov{T}[\sigma])[z:=(\mu a N \; \ep
[\sigma])]$ where $z$ is a fresh variable and the \textit{IH}
gives the result since $lgt(z) < lgt(x)$.

\item \label{4} $M = (x \; [x_1.M_1,x_2.M_2])$ and $\sigma(x) = (N_{3} \;
[y_1.N_1,y_2.N_2])$. By theorem \ref{carSN}, it is enough to show
$P= (N_3 \; [y_1.(N_1 \; [P]),y_2.(N_2 \;[P])]) \in SN$ where, for
$i=1,2$, $P_i=M_i[\sigma]$ and $(N_i \; [P])$ is a notation for
$(N_i \; [x_1.P_1,x_2.P_2])$. Let $M' = (z \; [x_1.M_1,x_2.M_2])$
where $z$ be a fresh variable. For $i=1,2$, let $\sigma_i = \sigma
\cup {[z:=N_i]}$. By the {\it IH}, $M'[\sigma_i] \in SN$ since
$\eta(\sigma') \leq \eta(\sigma)$ and $cxty(\sigma') <
cxty(\sigma)$. Then $(N_i \; [P]) \in SN$ since $M'[\sigma_i] \tr
(N_i \; [P])$. By theorem \ref{carSN}, it is thus enough to show
that $Q=hred(P) \in SN$.

If $hd(P)$ comes from $W$, the result follows from the
\textit{IH}. Otherwise, the various cases are:

\begin{itemize}
\item $N_3=\omega_i N'_3$ and
  $Q=(N_{i}[x_{i}:=N'_3] \; [P])$. Let $M' = (z \;
  [x_1.M_1,x_2.M_2])$ where $z$ is a fresh variable and $\sigma' = \sigma
  \cup \{[z := N_{i}[x_{i}:=N'_3]\}$. Then $Q=M'[\sigma'] \in SN$
  since $\eta(\sigma') \leq \eta(\sigma)$ and $cxty(\sigma') < cxty(\sigma)$.

\item  $N_3 = \mu a N'_3$ or $N_3=(Q_3 \; [y_1.Q_{1},y_2.Q_2])$. The proof is
similar.
\end{itemize}

\item If $M = (x \; \ep \; \ov{T})$, $\ep \neq [x_1.M_1,x_2.M_2]$ and $\sigma(x) = (N_3 \;
[x_1.N_1,x_2.N_2])$. We prove exactly as in case \ref{4} that
$(N_3 \; [x_1.N_1,x_2.N_2] \;  \ep [\sigma]) \in SN$. To prove
that $M[\sigma] \in SN$, it is enough to use the same trick as in
case 1: $M[\sigma]= (z \; \ov{T}[\sigma])[z:=(N_3 \;
[x_1.N_1,x_2.N_2] \; \ep [\sigma])]$ where $z$ is a fresh variable
and the \textit{IH} gives the result since $lgt(z) < lgt(x)$.

\end{enumerate}
\end{proof}

\section{Proof of theorem \ref{app}}\label{App}

The idea of the proof is the following: we show that an infinite
reduction of $S_1$ can be translated into an infinite reduction of
$S_2$. These reductions are  the same except that, in $S_1$, $\ep$
can be far away from the $N_i$. We mark $\ep$ and the $N_i$ to
keep their trace. This gives the  set of marked terms $\tst$ of
definition \ref{mark}. The correct terms of definition \ref{cor}
intuitively are the marked terms for which each marked $N_i$ {\em
knows} who is the corresponding marked $\ep$. Concretely, being
correct is a sufficient condition to ensure that a reduction in
the marked $S_1$ can be translated to the corresponding $S_2$.

The main difficulty of the proof consists in  writing precise
definitions. The proofs of the lemmas consist in easy but tedious
verifications.

\noindent {\em Important remark.} The proof is  uniform in the
sequence $\ep \; \ov{V}$. In definition \ref{mark} below, we
implicitly assume the following: if we are proving theorem
\ref{app} for $\ep \in {\cal T}$ (resp. $\ep=\pi_i$, $\ep=[y_1. \;
M_1, y_2. \; M_2]$) then, in the sub-terms of the form
$\bo{\ep'}$, we necessarily have $\ep' \in {\cal T}$ (resp.
$\ep'=\pi_i$, $\ep'=[y_1. \; Q_1, y_2. \; Q_2]$). Note that we
could also assume that $\ep'$ is a reduct of $\ep$  but this does
not really matter for the proof. However, in the case $\ep=[y_1.
\; P_1, y_2. \; P_2]$, since the sequence $\ep \; \ov{V}$ is nice
$\ov{V}$ is empty and this must appear in the proof. We will do
the proof only for $\ep \in {\cal T}$ or $\ep=\pi_i$. The proof
for the case $\ep=[y_1. \; P_1, y_2. \; P_2]$ is essentially the
same: we just have to add  an third condition in definition
\ref{cor} and check in the lemmas that this condition is
preserved. This new condition is given in the final remark of this
section.

\begin{definition}\label{mark}
\begin{enumerate}
  \item Let $\tst$ be the set of terms obtained from ${\cal T}$ by
adding  new constructors: $\st{N}$ and $\bo{\ep}$ where $N \in
{\cal T}$ and $\ep \in {\cal E}$ are closed.
 \item The reduction rules for $\tst$ are the ones of $\ct$ plus
  the following:
\begin{itemize}
  \item If $ N \tr N'$ then $\st{N} \tr \st{N'}$ and
  $\bo{N} \tr \bo{N'}$.
  \item $(\st{N} \; \bo{\ep}) \tr (N \; \ep)$.
\end{itemize}
\item Let $\trs$ be the congruence defined by the following reduction
rules:
\begin{itemize}
  \item $(M \; [x_1.N_1, x_2.N_2] \; \bo{\ep}) \trs (M \; [x_1. (N_1 \;
\bo{\ep}),
  x_2. (N_2 \; \bo{\ep})])$
  \item $(\mu a M \; \bo{\ep}) \trs \mu a M[a:=^* \bo{\ep}]$
\end{itemize}
\end{enumerate}
\end{definition}

\noindent {\em Comments}

An element  of $\tst$ is a term in ${\cal T}$  where some
sub-terms have been replaced by  terms as $\st{N}$ or $\bo{\ep}$
where  $N \in {\cal T}$ and $\ep \in {\cal E}$ and, in particular,
have no sub-terms as  $\st{N'}$ or $\bo{\ep'}$. It is assumed, in
the definition, that the $N$ and $\ep$ occurring in $\st{N}$ or
$\bo{\ep}$ are closed. In fact, they are allowed to have free
variables (both intuitionistic and classical) but it is assumed
that these variables will never be captured and thus act as
constants.

\begin{definition}\label{acc}
Let $M \in \tst$.
\begin{enumerate}
  \item $M$ is acceptable iff $M=\st{N}$ or $M=\m a  M_1$ and, for each
sub-term of $M$ of the form $(a \; N)$, $N$ is acceptable or $M=(N
\; [x_1. N_1, x_2.N_2])$ and $N_1, N_2$ are acceptable.
 \item If $M$ is acceptable, the set $st(M)$ of terms is defined by:
$st(\st{N})=\{\st{N}\}$, $st(\m a  M_1)=\cup \{st(S)\; / \; (a \;
S)$  sub-term of $M_1 \}$ and $st((N \; [x_1. N_1, x_2.N_2]))=
st(N_1) \cup st(N_2)$.
\end{enumerate}
\end{definition}

\begin{lemma}\label{app3}
Let $M \in \tst$ be an acceptable term.
\begin{enumerate}
\item  If $\sigma$ is a substitution either of the form $[x:=N]$ or
$[a:=^*N]$, then $M[\sigma]$ is acceptable
  and $st(M[\sigma]) = st(M)$.
\item If $M \tr M'$, then $M'$ is acceptable and $st(M') \subseteq
st(M)$.
\end{enumerate}

\end{lemma}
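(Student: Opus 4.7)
The plan is to prove both parts simultaneously by induction on the derivation that $M$ is acceptable, i.e.\ by splitting into the three clauses of Definition \ref{acc}, and, for part (2), with a further case analysis on the form of the reduction step $M \tr M'$. In each clause, the inductive hypothesis applies to the strictly smaller acceptable sub-terms that witness acceptability: the $N$'s occurring in sub-terms $(a\;N)$ of $M_1$ when $M = \mu a\,M_1$, and $N_1, N_2$ when $M = (N\;[x_1.N_1, x_2.N_2])$.

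For part (1), the case $M = \st{N}$ is immediate because of the running convention that the free variables appearing inside $\st{}$ and $\bo{}$ act as constants, so $M[\sigma] = M$. In the case $M = \mu a\, M_1$ I would $\alpha$-rename $a$ away from $\sigma$, so that $M[\sigma] = \mu a\,(M_1[\sigma])$; the sub-terms of the shape $(a\;P)$ in $M_1[\sigma]$ are exactly the $(a\;N[\sigma])$ coming from the original sub-terms $(a\;N)$ of $M_1$, and the IH gives each $N[\sigma]$ acceptable with $st(N[\sigma]) = st(N)$, so the unions coincide. The case $M = (N\;[x_1.N_1, x_2.N_2])$ is similar, with the IH applied to $N_1, N_2$ directly.

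For part (2), the easy situations are those where the reduction fires strictly inside one of the witness sub-terms: the IH gives acceptability of the rewritten sub-term together with $st(M')\subseteq st(M)$, and the inclusion propagates through the union defining $st$. The reductions acting on the outer shape of $M$, such as $\st{N}\tr \st{N'}$, preserve the top-level shape that justifies acceptability, while the reduction $(\st{N'}\;\bo{\ep})\tr (N'\;\ep)$ can only remove a $\st{}$-occurrence from a union, never add one, so $st$ can only shrink. The most subtle situations are reductions that act on a witness sub-term by triggering a substitution, in particular a classical-cut reduction $(\mu b\,W\;\zeta)\tr \mu b\,W[b:=^*\zeta]$ nested inside some $(a\;N)$; here one appeals to part (1), available by the simultaneous induction, to see that the substitution $[b:=^*\zeta]$ keeps acceptability and the $st$-set unchanged.

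The main obstacle will be the careful book-keeping required when a permutative or classical cut pushes a $[x_1.N_1, x_2.N_2]$ construct or a $\bo{\ep}$ marker through a witness sub-term: one must check that no new $(a\;Q)$ pattern or $(N\;[x_1.Q_1, x_2.Q_2])$ pattern with a non-acceptable $Q$ or $Q_i$ is created, and that every surviving $\st{}$-occurrence was already counted in $st(M)$. These verifications are routine but tedious, which matches the paper's warning that ``the proofs of the lemmas consist in easy but tedious verifications''.
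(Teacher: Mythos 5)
Your proposal is correct and follows essentially the same route as the paper, whose proof is simply ``By induction on $M$. (1) trivial. For (2) use (1).'' Your structural induction on the acceptability clauses of Definition \ref{acc}, with part (1) handled via the convention that variables inside $\st{N}$ and $\bo{\ep}$ act as constants and part (2) invoking part (1) for the substitution-triggering reductions, is exactly the intended argument, just spelled out in more detail.
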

\begin{proof} By induction on $M$. (1) trivial. For (2) use (1).
\end{proof}

\begin{definition}\label{cor}
A term $M \in \tst$ is correct if the following conditions hold.
\begin{enumerate}
  \item Each occurrence of a term of the
  form $\bo{\ep}$ appears as $(U \; \bo{\ep})$ for some acceptable term
$U$.
  \item  For each sub-term  of $M$ of the form $\st{N}$ there is
  a sub-term (necessarily  unique) of the form
  $(U \; \bo{\ep})$  such that $\st{N}$ belongs to
  $st(U)$. The corresponding $\ep$ is denoted as $eps(N)$
\end{enumerate}
\end{definition}

\noindent {\bf Examples}
\begin{itemize}
 \item Assume $M,N,O,P, \ep$ are closed terms.
Then $A=(M \;[x_1. \st{N}, x_2.  \st{O}]\; \bo{\ep} \; P)$ is
correct.

\item Assume $M,N,O,P,Q,R,S,\ep_1,\ep_2$  are closed terms.
Then $B=$

$(M \;[x_1.  (N \; [y_1. \; \st{O}, y_2. \; \mu a P] \;
\bo{\ep_1}),x_2. (\m b (b \; \mu c (c \; (Q \; [z_1.  \mu d R ,
z_2. \st{S}])))$ $ \; \bo{\ep_2})])$ is correct.
\end{itemize}

\begin{lemma}\label{T1}
If $M$ is correct and $M \tr M'$, then $M'$ is correct.
\end{lemma}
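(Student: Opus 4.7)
My plan is a case analysis on the reduction rule producing $M \tr M'$, combined with a short induction on the position of the redex. The guiding observation is that correctness is ``local'': by condition~(2), every $\st{N}$ sub-term of $M$ lies inside the acceptable $U$ of its unique partner $(U\;\bo{\ep})$, so each marker pairing is confined to a single sub-tree. Reductions taking place strictly outside any such sub-tree preserve correctness trivially, so the real work is to analyse reductions that interact with a marker.

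The easy cases are the internal marker reductions $\st{N}\tr\st{N'}$ and $\bo{\ep}\tr\bo{\ep'}$, which leave the position and number of each marker unchanged and hence transfer conditions~(1) and~(2) verbatim, and the contraction $(\st{N}\;\bo{\ep}) \tr (N\;\ep)$, which removes in one step the matched pair $\st{N}$ and $\bo{\ep}$ (since $U=\st{N}$ is acceptable with $st(\st{N})=\{\st{N}\}$) and replaces them by a term of $\ct$ containing no markers at all, leaving all other pairings in $M$ untouched.

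The crucial cases are the $\ct$-cuts in which $\bo{\ep}$ occurs as an argument of the redex. In the permutative cut $(U'\;[x_1.N_1,x_2.N_2]\;\bo{\ep}) \tr (U'\;[x_1.(N_1\;\bo{\ep}),x_2.(N_2\;\bo{\ep})])$ and the classical cut $(\m a N \; \bo{\ep}) \tr \m a N[a:=^*\bo{\ep}]$, $\bo{\ep}$ is duplicated; the key point is that the original parent $U$ is forced to be acceptable by condition~(1), which by definition~\ref{acc} guarantees that each new occurrence of $\bo{\ep}$ sits under an acceptable immediate predecessor (the $N_i$, or each $S$ with $(a\;S)$ a sub-term of $N$), and that $st(U)$ splits exactly into the union of the new $st$-sets, so every surviving $\st{Q}$ still has a unique new partner. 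Lemma~\ref{app3} provides the substitution bookkeeping for the classical cut; the remaining $\ct$-cuts (logical cuts, or permutative/classical cuts whose argument is not a $\bo{\ep}$) leave markers in place or duplicate them inside sub-terms that already contain them, so correctness is preserved by the same principle, using that marker contents are closed and hence untouched by substitutions.

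The main obstacle I expect is precisely this case-by-case bookkeeping: for every cut and every way a marker can appear in it, systematically verifying that each new $\bo{\ep}$-predecessor is acceptable and that partner uniqueness in condition~(2) is not broken by the duplication. The definition of $st$ in terms of the $\m$ and $[x_1.\cdot,x_2.\cdot]$ sub-terms is what makes this splitting clean, and it is also what forces the slightly delicate inductive clauses of the acceptability definition.
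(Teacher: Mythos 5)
Your proof is correct and takes essentially the same route as the paper's: the paper likewise reduces the lemma to a case analysis on where the reduced redex sits relative to each sub-term $(U \; \bo{\ep})$ (inside $\ep$, inside $U$, between $U$ and $\ep$, or above it), and relies on the acceptability of $U$ together with Lemma~\ref{app3} to see that the duplicating permutative and classical cuts, the contraction $(\st{N}\;\bo{\ep}) \tr (N\;\ep)$, and substitutions all preserve the two correctness conditions. Your write-up is in fact more detailed than the paper's own two-line verification.
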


\begin{proof}  Let
$(U \; \bo{\ep})$ be a sub-term of $M$. A reduction can be, either
in $\ep$ or in $U$ or between $U$ and $\ep$ or, finally, above $(U
\; \bo{\ep})$. Since $U$ is acceptable and by using lemma
\ref{app3} it is easy to check that, in each case the conditions
of correctness are still satisfied.
\end{proof}

\begin{lemma}\label{car*}
Let $M$ be a correct term.
\begin{enumerate}
\item $M$ has no sub-term of the form $(O \,\, \st{N})$.
\item If $(\st{N} \,\, O)$ is a  sub-term of $M$, then  $O = \bo{\ep}$ for some $\ep$.
\end{enumerate}
\end{lemma}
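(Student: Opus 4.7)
The plan is to pin down exactly the positions at which an occurrence of $\st{N}$ can sit inside a correct term $M$, and then read both parts off mechanically. The key observation is that condition~(2) of correctness forces every occurrence of $\st{N}$ in $M$ to lie inside some acceptable subterm $U$ which itself appears as the head of some $(U\;\bo{\ep})$; so the whole question reduces to analysing how a $\st{\cdot}$ can sit inside an acceptable term.

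To do that I would prove, by induction following the definition of acceptability (equivalently, by induction on the descent chain witnessing $\st{N}\in st(U)$), the following positional lemma: if $U$ is acceptable and $\st{N}\in st(U)$, then the immediate parent in $M$ of each such occurrence of $\st{N}$ has one of exactly three shapes --- (a) the application $(U\;\bo{\ep})$ itself, in the case $U=\st{N}$; (b) a classical-variable application $(a\;\st{N})$, arising from descending through some $\m a U_1$ into a subterm $(a\;S)$ of $U_1$ with $S=\st{N}$; or (c) a case-branch binder $x_i.\st{N}$, arising from descending through $(V\;[x_1.V_1,x_2.V_2])$ with $V_i=\st{N}$. The induction step is a direct unfolding of the three clauses of $st$ and of acceptability.

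From this positional classification both claims are immediate. For~(1), in none of (a), (b), (c) is $\st{N}$ the argument of a general application $(T\;E)$: in~(a) it is the \emph{head}, in~(b) its parent is the special constructor $(a\;\cdot)$ which is not a general application, and in~(c) its parent is a branch binder, not an application at all. Hence no subterm of the form $(O\;\st{N})$ exists in $M$. For~(2), if $(\st{N}\;O)$ is a subterm of $M$ then $\st{N}$ sits as the head of a general application; among the three positions only~(a) fits, so the application is precisely $(U\;\bo{\ep})$ with $U=\st{N}$, and thus $O=\bo{\ep}$.

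I do not expect any real obstacle: the only care needed is the bookkeeping around the descent --- one must check that the descent through $\m a U_1$ really follows occurrences of $(a\;S)$ with $a$ equal to the abstracted variable (so that the parent node is a genuine $abs_e$ and not some other application), and that the context around $\st{N}$ inside $U$ coincides with its context in $M$ above the $(U\;\bo{\ep})$ subterm. Both points are routine, and once the positional lemma is stated correctly the rest of the proof is immediate.
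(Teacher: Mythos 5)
Your proposal is correct and follows the same route as the paper: the paper's proof simply takes the witnessing sub-term $(U\;\bo{\ep})$ given by correctness and asserts that the claim ``follows easily from the fact that $U$ is acceptable,'' which is exactly the content of your positional lemma classifying where a $\st{N}$ can sit inside an acceptable $U$ (as $U$ itself, as the body of an $(a\;\cdot)$, or as a case branch). Your write-up just makes explicit the case analysis the paper leaves to the reader.
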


\begin{proof} Otherwise, let $(U
\; \bo{\ep})$ be the sub-term such that $\st{N} \in st(U)$. The
result follows easily from the fact that $U$ is acceptable.
\end{proof}

\begin{definition}
Let $M \in \tst$.
\begin{enumerate}
  \item $T_1(M)$ is the term obtained by replacing $\st{N}$ by $N$ and
$\bo{\ep}$ by $\ep$.
  \item If $M$ is a sub-term of a correct term,  $T_2(M)$ is the term obtained by replacing
each occurrence of
 $(U \; \bo{\ep})$ by $U'$ where $U'$ is obtained from $U$ by replacing
 each occurrence of $\st{N}$ such that $\ep=eps(N)$ is a sub-term of $M$ by $(N \; \ep)$.
\end{enumerate}
\end{definition}
\medskip

\noindent {\bf Comments and examples}
\begin{enumerate}
\item If $M$ itself is correct, $T_2(M) \in \ct$. Otherwise, some $\st{N}$ that are related
to a $\bo{\ep}$ outside $M$ are not replaced. We need this more
general definition for the proof of lemma \ref{app1}.
\item If  $M$ is correct, $T_1(M)
  \tr^* T_2(M)$. More precisely $T_2(M)=T_1(M')$ where $M'$ is the normal form of $M$
  for the rules $\trs$.
 Since we will not use this result, we do not prove
  it.
\item Let $A,B$ be the terms of the previous
example. Then

$T_1(A)=(M \;[x_1. N, x_2.  O]\; \ep \; P)$ and $T_2(A)=(M \;[x_1.
(N \; \ep), x_2. (O \; \ep)] \; P)$.

$T_1(B)=(M \;[x_1.(N \; [y_1. \; O, y_2. \; \mu a P] \;
\ep_1),x_2.(\m b (b \; \mu c (c \; (Q \; [z_1.\mu d R, z_2.S])))
\; \ep_2)])$ and $T_2(B)=(M \;[x_1.  (N \; [y_1. \; (O \; \ep_1),
y_2. \;
      \mu a P]),x_2. \m b (b \; \mu c (c \; (Q \; [z_1.\mu d R ,$

      $z_2.(S \; \ep_2)])))])$.
\end{enumerate}

\begin{lemma}\label{app1}

Let $M \in \tst$ be correct. If $T_1(M) \tr N$,  there is a
correct term $M'$ such that $M \tr^+ M'$ and $T_1(M')=N$.
\end{lemma}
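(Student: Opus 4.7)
The plan is a case analysis on the redex contracted by the step $T_1(M) \tr N$: I trace that redex back to its preimage in $M$ and then lift it to one or two $\tst$-steps. The combinatorial bookkeeping is controlled entirely by lemma \ref{car*}: each $\st{P}$ in $M$ occurs only as the left factor of a pair $(\st{P} \; \bo{\ep'})$, so the only positions at which $T_1$'s erasure can create a new redex are exactly at such pairs.

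I would distinguish three situations. \textbf{(a)} The redex in $T_1(M)$ is already present at the same position in $M$, possibly with some $\bo{\ep'}$ in argument positions (the $\ct$-reduction rules extend verbatim to $\tst$, letting $\bo{\ep'}$ act as an element of ${\cal E}$; in particular $(\mu a P \; \bo{\ep'}) \tr \mu a P[a:=^*\bo{\ep'}]$ and $(P \; [x_1.Q_1, x_2.Q_2] \; \bo{\ep'}) \tr (P \; [x_1.(Q_1 \; \bo{\ep'}), x_2.(Q_2 \; \bo{\ep'})])$ are valid one-step $\tst$-reductions). Then I contract that redex in $M$ directly, obtaining $M \tr M'$ with $T_1(M') = N$ by inspection. \textbf{(b)} The redex lies strictly inside the content of some $\st{P}$ or $\bo{\ep'}$ in $M$; I apply the propagation rule $N \tr N' \Rightarrow \st{N} \tr \st{N'}$ (or the analogous one for $\bo{\cdot}$). \textbf{(c)} The redex was created by $T_1$'s erasure: by lemma \ref{car*} its preimage in $M$ is a pair $(\st{P} \; \bo{\ep'})$, and I use two steps — first the fusion $(\st{P} \; \bo{\ep'}) \tr (P \; \ep')$, then the underlying logical contraction — yielding $M \tr^+ M'$ with $T_1(M') = N$.

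Correctness of $M'$ in every case is delivered by lemma \ref{T1}, applied once or twice. The main technical obstacle lies in case (a) for the reductions that duplicate or substitute $\bo{\ep'}$: when $\bo{\ep'}$ is copied by the permutation step or distributed by the classical cut, we must verify that the requirement of definition \ref{cor}(2) — each $\st{N}$ in the result being attached to a unique enclosing pair $(U \; \bo{\ep})$ — survives. Here I would invoke lemma \ref{app3} (substitution preserves acceptability and the set $st$), together with the identity $st((P \; [x_1.Q_1, x_2.Q_2])) = st(Q_1) \cup st(Q_2)$: each $\st{N}$ that was in $Q_i$ and previously tracked to the single outer $\bo{\ep'}$ is now tracked to the new copy at $(Q_i \; \bo{\ep'})$, again uniquely.
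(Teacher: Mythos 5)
Your proposal is correct and follows essentially the same route as the paper: trace the contracted redex back to $M$ and split into (i) a redex already present in $M$, (ii) a redex inside the body of a $\st{N}$ or $\bo{\ep'}$ handled by the propagation rules, and (iii) a redex created by the erasure, which lemma \ref{car*} (together with correctness condition 1) localizes at a pair $(\st{P}\;\bo{\ep'})$ and which is simulated by the fusion step followed by the contraction, with correctness preserved by lemma \ref{T1}. The paper's own proof is the same three-way case analysis, only stated more tersely.
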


\begin{proof}
  Let $R$ be the redex that has been reduced. By
lemma \ref{car*}, the only cases to consider are:
\begin{itemize}
\item There is a redex $S$ in $M$  such that $R= T_1(S)$. The result follows then from
 lemma \ref{cor}.
 \item There is a sub-term of $M$ of the form  $\st{N}$ or $\bo{\ep}$,
 such that $R$ is a sub-term of $N$ or $\ep$. The result is then
 trivial.
\item Finally, $R=(T_1(U) \; \ep)$ where $(U \; \bo{\ep})$ is a sub-term of $M$, the result follows from
the fact that $U$ is acceptable. \qed
\end{itemize}
\end{proof}

\begin{lemma}\label{app4}
Let $P=(M \; O)$ be a sub-term of a correct term.   Assume $O \neq
\bo{\ep}$ and $P \tr P'$ by reducing the redex $P$. Then, $T_2(P)
\tr^* T_2(P')$.
\end{lemma}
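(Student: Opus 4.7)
The plan is case analysis on the rule witnessing $P \tr P'$. Since $P=(M\;O)$ is the top-level redex and $O\neq\bo{\ep}$, the marked rule $(\st{N}\;\bo{\ep})\tr(N\;\ep)$ is excluded, and reductions internal to some $\st{}$ or $\bo{}$ are not top-level, so $P$ is one of the five ordinary $\ct$-redexes: the three logical cuts ($\b$, $\pi$, $\ou$-elimination), the permutative cut, or the classical cut.

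In each case I first unfold $T_2(P)$. Because $O\neq\bo{\ep}$, the outer application of $P$ is not itself of the form $(U\;\bo{\ep})$, and (using lemma \ref{car*}) no other pair $(U\;\bo{\ep})$ can span both $M$ and $O$, so $T_2$ commutes with the outermost application: $T_2(P)$ has the same top-level shape as $P$, with $T_2$ applied recursively to each immediate sub-term. I then reduce the exposed redex of $T_2(P)$, obtaining some $Q$, and check that $Q=T_2(P')$ (or reduces to it). The $\pi_i$ case is immediate: $Q=T_2(N_i)=T_2(P')$. The permutative case is a one-step match, using that $O\neq\bo{\ep}$ implies $(N_i\;O)$ is not of the form $(U\;\bo{\ep})$, so that $T_2(N_i\;O)=(T_2(N_i)\;T_2(O))$ and both sides coincide after a single permutative step inside $T_2(P)$.

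For the $\b$, $\ou$, and classical cases, one reduction of the top redex of $T_2(P)$ yields $T_2(R)[x:=T_2(S)]$ (for $\b$ and $\ou$) or $\m a\, T_2(R)[a:=^*T_2(S)]$ (for classical), while $T_2(P')$ is $T_2(R[x:=S])$, respectively $\m a\, T_2(R[a:=^*S])$. Equality therefore reduces to a commutation lemma asserting that $T_2$ commutes with both intuitionistic and classical substitution, i.e.\ $T_2(R[x:=S])=T_2(R)[x:=T_2(S)]$ and $T_2(R[a:=^*S])=T_2(R)[a:=^*T_2(S)]$.

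The main obstacle is establishing this commutation. Its verification rests on three structural facts. First, the interiors of $\st{}$ and $\bo{}$ are opaque to substitution, so the pairs present in $R$ and in $S$ survive in $R[\sigma]$ with the same $eps$-pairing, though possibly duplicated when the substituted variable occurs multiply (or when $[a:=^*S]$ wraps an $(a\,V)$ whose $V$ already carries pairs). Second, by preservation of correctness (lemma \ref{T1}) together with the shape constraints on acceptable terms (lemmas \ref{car*} and \ref{app3}), substitution cannot create spurious new pairs $(U\;\bo{\ep})$: if $x$ or $a$ stood in a position that would combine with $S$ to form such a pair, correctness of the ambient term would already have forbidden it. Third, since $T_2$ folds every pair uniformly, the duplicated pairs yield identical folded copies on both sides of the commutation. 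A direct induction on $R$, treating each constructor and in particular the $\m b$ and case-branching clauses, then closes the argument.
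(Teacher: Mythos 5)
Your proposal is correct and follows essentially the same route as the paper, which simply exhibits the $\beta$ case ($T_2(P)=(\l x\, T_2(M_1)\; T_2(O))$ reducing to $T_2(M_1)[x:=T_2(O)]=T_2(P')$) and declares the other cases similar. You make explicit the commutation of $T_2$ with intuitionistic and classical substitution that the paper's ``straightforward to check'' leaves implicit, and your justification of it (pairs cannot span the outermost application since $O \neq \bo{\ep}$, and duplication of pairs under substitution folds identically on both sides) is sound.
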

\begin{proof}
It is, for example, straightforward to check that, if $M=\l x
M_1$, then $T_2(P)=(\l x T_2(M_1) \; T_2(O))$ and
$T_2(P')=T_2(M_1) [x:=T_2(O)]$. The other cases are similar.
\end{proof}

\begin{lemma}\label{app5}
Assume $M=(N \; \bo{\ep})$ is a sub-term of a correct term. Then,
\begin{itemize}
  \item $T_2(M)=T_2((T_2(N) \; \bo{\ep}))$.
  \item If $N$ has no sub-terms of the form $\bo{\ep'}$ and $N \tr
  N'$ then, $T_2(M) \tr^* T_2((N' \; \bo{\ep}))$.

\end{itemize}

\end{lemma}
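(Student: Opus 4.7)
The plan is to decompose the action of $T_2$ on $(N \; \bo{\ep})$ into two independent pieces: an \emph{inner} pass, which processes every $(U' \; \bo{\delta'})$ sub-term of $N$ (this is exactly the action of $T_2$ on $N$ viewed as a sub-term of the ambient correct term), and an \emph{outer} pass, which replaces each $\st{K}$ of $N$ with $eps(K) = \ep$ by $(K \; \ep)$ (the contribution of the outer pattern $(N \; \bo{\ep})$). The key observation for part 1 is that these two passes are independent: the inner pass does not touch $\st{K}$'s with $eps(K) = \ep$ because, by the uniqueness clause of definition \ref{cor}, such a $\st{K}$ belongs to $st(N)$ and hence not to $st(U')$ for any inner $(U' \; \bo{\delta'})$; conversely, the outer pass does not create new $\bo{\delta'}$ sub-terms. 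Consequently, doing both passes at once, which is $T_2((N \; \bo{\ep}))$, gives the same result as performing the inner pass first and then the outer pass, which is $T_2((T_2(N) \; \bo{\ep}))$, since $T_2(N)$ already contains no $(U' \; \bo{\delta'})$. I would formalise this by a structural induction on $N$ following the acceptable-term cases $\st{K}$, $\m a \, M_1$, and $(P \; [x_1.P_1, x_2.P_2])$.

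For part 2, the hypothesis that $N$ contains no $\bo{\delta'}$-sub-term makes the inner pass trivial: $T_2(N) = N$. Thus, by part 1, $T_2((N \; \bo{\ep}))$ is the term $N^{\ep}$ obtained from $N$ by replacing each $\st{K}$ (necessarily with $eps(K) = \ep$) by $(K \; \ep)$; similarly $T_2((N' \; \bo{\ep})) = {N'}^{\ep}$, once one checks that reduction does not create new $\bo{\delta'}$-sub-terms. It then suffices to show $N^{\ep} \tr^* {N'}^{\ep}$ whenever $N \tr N'$, and I would proceed by case analysis on the contracted redex. By lemma \ref{car*}, every $\st{K}$ inside $N$ can only occur in passive positions: it cannot be the operator of an application (which would force the operand to be some $\bo{\delta'}$, absent in $N$) and cannot be the direct operand of one. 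So each redex of $N$ is either a standard $\ct$-redex in which $\st{K}$'s appear only as inert sub-terms that may be duplicated by substitution, or else the internal step $\st{K} \tr \st{K'}$ induced by $K \tr K'$.

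In the first case, firing the corresponding redex of $N^{\ep}$ produces ${N'}^{\ep}$ in one step, using the routine fact that substitution commutes with the replacement $\st{K} \mapsto (K \; \ep)$ (capture-free, since the $K$ appearing in $\st{K}$ are closed). In the second case, $N^{\ep} \tr {N'}^{\ep}$ follows directly from $(K \; \ep) \tr (K' \; \ep)$. The main obstacle is not conceptual but organisational: the case analysis for the logical, classical and permutative redexes must be carried out explicitly, and it is lemma \ref{car*} that rules out any hidden redex occurring at a $\st{K}$ boundary and ensures that the lifting to $N^{\ep}$ mirrors the reduction in $N$ step for step.
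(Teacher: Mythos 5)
The paper's own proof of this lemma is just ``Straightforward,'' and your argument is precisely the direct verification it has in mind: part 1 by splitting $T_2$ into the inner pass on $N$ and the outer distribution of $\ep$ over the $\st{K}$ matched to the outer $\bo{\ep}$ (independent by the uniqueness clause of the correctness definition), and part 2 by lifting each reduction step of $N$ to $N^{\ep}$, with lemma \ref{car*} guaranteeing that the $\st{K}$ occur only in passive positions so no redex is hidden or destroyed by the replacement $\st{K}\mapsto (K\;\ep)$. Correct, and essentially the same (omitted) argument as the paper's.
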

\begin{proof}
Straightforward.
\end{proof}

\begin{lemma}\label{app1'}
Let $M $ is a sub-term of a correct term. If $M \tr M'$, then
$T_2(M) \tr^* T_2(M')$. Moreover, if
  $T_2(M)=T_2(M')$, then $M \trs M'$.
\end{lemma}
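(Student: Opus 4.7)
The plan is to proceed by structural induction on $M$, splitting on where the reduction $M \tr M'$ fires and, when it is at the head of $M$, on the shape of the redex. Lemma \ref{T1} guarantees that every relevant sub-term remains a sub-term of a correct term, so the induction hypothesis is applicable throughout.

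If $M$ is not of the form $(U \; \bo{\ep})$, the argument is routine: a reduction strictly inside a proper sub-term is handled by the induction hypothesis together with the observation that $T_2$ acts compositionally through the surrounding constructors, and a head-redex of $M$ is then of a shape to which Lemma \ref{app4} directly applies. The moreover part propagates by the same mechanism, since $\trs$ is a congruence.

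The substantive case is $M = (U \; \bo{\ep})$ with $U$ acceptable (as forced by correctness). When the redex lies inside $\bo{\ep}$, so that $\ep \tr \ep'$, the term $T_2(M)$ contains one copy of $\ep$ per element of $st(U)$, and I would reduce each copy in turn to obtain $T_2(M) \tr^* T_2(M')$; this step strictly alters $T_2$, so the moreover clause is vacuous. When the redex lies inside $U$, Lemma \ref{app5} handles the case where $U$ has no deeper marked application, and otherwise one descends into $U$ by the IH, using Lemma \ref{app3} to know that $st(U)$ and the association $eps$ are preserved by the reduction. When the redex is the whole $(U \; \bo{\ep})$, the three possibilities are the new rule $(\st{N} \; \bo{\ep}) \tr (N \; \ep)$, the classical cut $(\mu a N \; \bo{\ep}) \tr \mu a\, N[a :=^* \bo{\ep}]$, and the permutative cut for disjunction elimination: unfolding $T_2$ with the formulas $st(\mu a N) = \bigcup\{st(S) : (a\;S) \text{ is a sub-term of } N\}$ and $st((N \; [x_1.N_1, x_2.N_2])) = st(N_1) \cup st(N_2)$, a direct computation shows $T_2(M) = T_2(M')$ in each, and the step is a $\trs$-step (the new rule appearing as the degenerate case $U = \st{N}$).

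The main obstacle I expect is the bookkeeping when the redex lies deep inside an acceptable $U$ that still contains further marked applications $(U' \; \bo{\ep'})$: the induction must be carried on the sub-term actually holding the redex, not on $U$ in isolation, and one has to verify that $T_2$'s simultaneous treatment of the outer $\bo{\ep}$ and any inner $\bo{\ep'}$ does not interfere. The earlier lemmas \ref{T1}, \ref{app3}, \ref{app4}, and \ref{app5} were designed exactly to isolate these interactions, so the proof should reduce to threading them through the case analysis rather than introducing any new combinatorial argument.
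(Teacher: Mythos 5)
Your proposal is correct and follows essentially the same route as the paper: the paper also reduces everything to the case $M=(N\;O)$, splits on whether $O=\bo{\ep}$, on whether the contracted redex is $M$ itself, lies in $N$, or lies in $\ep$, and invokes Lemmas \ref{app4} and \ref{app5} at exactly the points you do, with the induction hypothesis handling the case where $N$ still contains marked applications. The only cosmetic difference is that the paper phrases the induction as being on the pair $(nb(M),cxty(M))$ (with $nb(M)$ the number of $\bo{\ep}$ sub-terms) rather than as a plain structural induction, but since the induction hypothesis is only ever applied to proper sub-terms the two formulations coincide; your treatment of the head-redex case, including the degenerate rule $(\st{N}\;\bo{\ep})\tr(N\;\ep)$, matches the paper's assertion that $T_2(M)=T_2(M')$ and $M\trs M'$ there.
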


\begin{proof} By induction on $(nb(M), cxty(M))$ where $nb(M)$ is the number
 of sub-terms of the form $\bo{\ep}$ in $M$. The only non trivial  case is $M
  = (N \, O)$.
\begin{itemize}
\item Assume $O \neq \bo{\ep}$. If $M' =
  (N' \, O)$ where $N \tr N'$ or $M' = (N \, O')$ where $O \tr O'$,
  the result is trivial. Otherwise, $M$ itself is the reduced
  redex and the result comes from lemma \ref{app4}.
\item Assume $O = \bo{\ep}$. If $M$ itself is the reduced redex then $T_2(M)=T_2(M')$ and
$M \trs M'$. If $M' = (N \,
  \bo{\ep'})$ where $\ep \tr \ep'$, the result is trivial. Otherwise, i.e. $M' =
  (N' \, \bo{\ep})$ where $N \tr N'$. If $nb(N)=0$, the result follows from lemma \ref{app5}(2).
  Otherwise, by the induction
  hypothesis, $T_2(N) \tr^* T_2(N')$ and the result comes from lemma \ref{app5}.  \qed
\end{itemize}
\end{proof}

\begin{lemma}\label {app2}
Let $M \in \tst$ be a correct term. Then $M$ is strongly
normalizable for the $\trs$ reduction.
\end{lemma}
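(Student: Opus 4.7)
The plan is to exhibit an integer-valued measure $\Phi(M)\in\mathbb{N}$ on correct terms that is strictly decreased by every $\trs$ step; since $\mathbb{N}$ is well-ordered, strong normalization follows. The intuition is that each $\trs$ rule pushes an occurrence of $\bo{\ep}$ one level deeper into the acceptable structure of its parent $U$, and since that structure is finite, no infinite $\trs$ reduction is possible.

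First I would attach to each acceptable term $U$ an \emph{acceptable depth} $d(U)\in\mathbb{N}$ by structural induction on the three cases of Definition \ref{acc}: $d(\st{N})=0$, $d(\mu a M_1)=1+\sum\{d(N)\mid (a\;N)\text{ sub-term of }M_1\}$, and $d((W\;[x_1.W_1,x_2.W_2]))=1+d(W_1)+d(W_2)$. For a correct term $M$, I define $\Phi(M)=\sum d(U)$, summed over all sub-terms of $M$ of the form $(U\;\bo{\ep})$; by Definition \ref{cor}(1) each occurrence of $\bo{\ep}$ contributes exactly one term.

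The key auxiliary fact is a substitution lemma: if $N$ is acceptable and $\sigma=[a:=^*\bo{\ep}]$, then $N[\sigma]$ is acceptable and $d(N[\sigma])=d(N)$. This is proved by structural induction on the acceptable decomposition of $N$: the substitution only inserts $\bo{\ep}$ after certain $(a\;W)$ sub-terms and leaves the $\st{\cdot}$/$\mu b$/$(W\;[y_1.W_1,y_2.W_2])$ skeleton on which $d$ is defined unchanged. Using this, I verify by case analysis on the two $\trs$ rules that reducing the sub-term $(X\;\bo{\ep_0})$ decreases $\Phi$ by exactly $1$: in the permutative case, the single contribution $d((M\;[x_1.N_1,x_2.N_2]))=1+d(N_1)+d(N_2)$ is replaced by two contributions summing to $d(N_1)+d(N_2)$; in the classical case, the single contribution $d(\mu a M_1)=1+\sum_{(a\;N)}d(N)$ is replaced by $\sum_{(a\;N)}d(N[\sigma])=\sum_{(a\;N)}d(N)$.

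The main obstacle I expect is confirming that the net change remains $-1$ even when the $\trs$ step occurs deep in $M$, so that the contributions of all other $\bo{\ep}$-occurrences are unaffected. For this I would check that whenever $U'$ is the acceptable parent of some unrelated $\bo{\ep'}$ and the reduction modifies $U'$, the resulting term is still acceptable with unchanged $d$. Acceptability is propagated through the involved sub-terms exactly as in Lemma \ref{app3}(1), and invariance of $d$ under the modification reduces once again to the substitution lemma. Once this systematic bookkeeping is complete, $\Phi$ strictly decreases at every $\trs$ step and, being a non-negative integer, bounds every $\trs$ reduction sequence, yielding strong normalization.
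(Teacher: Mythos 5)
Your proposal is correct and takes essentially the same route as the paper: the paper's proof also just exhibits a decreasing integer measure, namely $lg(M)$, the sum over all marked sub-terms $\st{N}$ of the length of the tree path from $\st{N}$ to the corresponding $\bo{\ep}$, and observes that every $\trs$ step strictly decreases it. Your $\Phi$ (summing the sizes of the acceptable skeletons of the parents of the $\bo{\ep}$'s) is a minor variant of the same idea, worked out in more detail than the paper's one-line argument.
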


\begin{proof}
If $M$ is correct, let $lg(M)$ be the sum of the length of the
path (i.e. the number of nodes in the tree representing $M$)
relating the $\st{N}$ to the corresponding $\bo{\ep}$. It is easy
to see that, if $M \trs M'$, then $lg(M') < lg(M)$.
\end{proof}

\medskip

\noindent {\bf Proof of theorem \ref{app}}

Assume $S_2 \in SN$ and $S_1 \not\in SN$.  Let $(U_i)$ be a
sequence of terms such that $U_0=S_1$ and, for each $i$, $U_i \tr
U_{i+1}$. Let $M=(N \; [x_1. \;\st{N_1}, x_2. \; \st{N_2}]\;
\bo{\ep} \; \ov{V})$. By using lemma \ref{app1}, we get a sequence
of correct terms $M_i$ such that, for each $i$, $M_i \tr^+
M_{i+1}$ and $T_1(M_i)=U_i$. By lemma  \ref{app1'}, $T_2(M_i)
\tr^* T_2(M_{i+1})$.  Since $S_2=T_2(M) \in SN$, there is an $i_0$
such that, for $i \geq i_0$, $T_2(M_i)=T_2(M_{i+1})$ and thus, by
lemma \ref{app1'}, $M_i \trs^+ M_{i+1}$.  This contradicts lemma
\ref{app2}. \hfill $\square$

\medskip

\noindent {\bf Remark.}

Assume that, in theorem \ref{app}, $\ep=[y_1. Q_1, y_2.Q_2]$. If
$\ov{V}$ were not empty, the proof of lemma \ref{app1} would not
work because a redex could be created by the transformation $T_1$.
Here is an example:  let $M=(P \; \bo{\ep} \; V)$ be correct and
assume $T_1(M)=(P \; [y_1. Q_1, y_2.Q_2] \; V) \tr N=(P \; [y_1.
(Q_1 \; V), y_2.(Q_2 \; V) ])$.  There is no way to find $M'$ such
that $M \tr M'$ and $T_1(M')=N$ because $(P \; \bo{\ep} \; V)$ is
not a redex.

We do not know wether theorem  \ref{app} remains true if the
sequence $\ep \; \ov{V}$ is not nice: to prove it, $(P \; \bo{\ep}
\; V)$ should then be considered as a redex but $\tst$ becomes
much more complicated. Since  $\ep \; \ov{V}$ is  nice, it is
simpler to add a new condition in the definition \ref{cor} of
correctness to ensure that this situation (of the creation of a
redex by  the transformation $T_1$) does not appear. This
condition is the following:

\begin{center}

{\it 3.} $M$ is good wrt the set of all its sub-terms of the form
   $(U \; \bo{\ep})$.
   \end{center}

\noindent   where, if $E$ is a subset of $\tst$,  $M$ is good wrt
to $E$ is defined by:  $M \in E$ or $M= \m a \; N$ and for each
occurrence of $(a \; N)$ in $M$, $N$ is good wrt to $E$ or $M=(N
\; [x_1. N_1, x_2.N_2])$ and $N_1, N_2$ are good wrt to $E$.

This condition  implies in particular that, in a correct term,
there is no sub-term of the form $(N \; \bo{\ep} U)$ and thus that
lemma \ref{app1} remains valid. It is not difficult to check that
the other lemmas  remain  also  valid.

\end{document}